\documentclass[conference] {IEEEtran}
\usepackage[utf8]{inputenc}
\usepackage[reqno]{amsmath}
\usepackage{amssymb}
\usepackage{amsthm}
\usepackage{mathtools}
\usepackage{paralist}

\usepackage{todonotes}

\newcommand{\R}{\mathbb{R}}
\newcommand{\Q}{\mathbb{Q}}
\newcommand{\N}{\mathbb{N}}
\newcommand{\Z}{\mathbb{Z}}

\let\emptyset\varnothing

\usepackage{dsfont}

\newcommand{\eps}{\varepsilon}
\newcommand{\Realization}{\operatorname{R}}

\newcommand{\Indicator}{\mathds{1}}
\newcommand{\FirstN}[1]{\underline{#1}}

\newcommand{\lowerBound}{a_N^{\omega,-}}
\newcommand{\upperBound}{a_N^{\omega,+}}

\newtheorem{lemma}{Lemma}[section]
\newtheorem{proposition}[lemma]{Proposition}
\newtheorem{theorem}[lemma]{Theorem}

\theoremstyle{definition}
\newtheorem{definition}[lemma]{Definition}

\theoremstyle{remark}

\newtheorem*{remark*}{Remark}

\numberwithin{equation}{section}

\begin{document}

\title{Approximation in $L^p(\mu)$\\ with deep ReLU neural networks}

\author{
\IEEEauthorblockN{Felix {\sc Voigtlaender}}
\IEEEauthorblockA{Katholische Universität Eichstätt--Ingolstadt\\
Ostenstraße 26, 85072 Eichstätt, Germany\\
\texttt{felix@voigtlaender.xyz}}

\and \IEEEauthorblockN{Philipp {\sc Petersen}}
\IEEEauthorblockA{
Mathematical Institute, University of Oxford\\
OX2 6GG, Oxford, UK\\
\texttt{pc.petersen.pp@gmail.com}}
}

\maketitle

\begin{abstract}
  We discuss the expressive power of neural networks which use the non-smooth
  ReLU activation function \({\varrho(x) = \max\{0,x\}}\)
  by analyzing the approximation theoretic properties of such networks.
  The existing results mainly fall into two categories:
  approximation using ReLU networks with a \emph{fixed} depth,
  or using ReLU networks whose depth \emph{increases} with the approximation accuracy.
  After reviewing these findings, we show that the results concerning networks with fixed depth---%
  which up to now only consider approximation in \(L^p(\lambda)\) for the Lebesgue measure \(\lambda\)---%
  can be generalized to approximation in \(L^p(\mu)\),
  for any finite Borel measure \(\mu\).
  In particular, the generalized results apply in the usual setting of statistical learning theory,
  where one is interested in approximation
  in \(L^2(\mathbb{P})\), with the probability measure \(\mathbb{P}\) describing the distribution of the data.
\end{abstract}

\section{Introduction}
\label{sec:Introduction}

In recent years, machine learning techniques based on deep neural networks
have significantly advanced the state of the art in applications like image classification,
speech recognition, and machine translation.
The networks used for such applications tend to use the non-smooth ReLU activation function
${\varrho(x) = \max\{0,x\}}$, since it is empirically observed to improve the training procedure
\cite{DeepLearningNature}.

In this paper, we focus on the \emph{expressive power} of such neural networks.
Precisely, given a function class $\mathcal{F}$ and an approximation accuracy $\eps > 0$,
we aim to find a \emph{complexity bound} $N = N(\mathcal{F}, \eps)$
such that for any $f \in \mathcal{F}$, one can find a ReLU network $\Phi_{\eps}^f$
of complexity at most $N$ satisfying ${\|f - \Phi_{\eps}^f\| \leq \eps}$.
Here, the \emph{complexity} of the network is measured in terms of its \emph{depth}
(the number of layers) and in terms of the number of \emph{neurons} and \emph{weights}.
The approximation error will be either measured in the uniform norm or in $L^p(\mu)$
for some measure $\mu$.
When we simply write $L^p$, it is understood that $\mu = \lambda$ is taken
to be the Lebesgue measure.

\paragraph*{Structure of the paper}

We start by reviewing existing results which provide complexity bounds $N(\mathcal{F}, \eps)$
for approximating functions from the class $\mathcal{F} = \mathcal{F}_{d,\beta,B}$
of all $C^\beta$ functions $f$ on $Q := Q_d := [-\tfrac{1}{2}, \tfrac{1}{2}]^d$
that satisfy $\|f\|_{C^\beta} \leq B$.
These results fall into two categories:
The first considers approximation in $L^p$ using ReLU networks of \emph{fixed} depth,
while the second considers \emph{uniform} approximation using networks of \emph{increasing} depth.
We also present a novel result, showing that the complexity bounds of the first category also apply for
approximation in $L^p(\mu)$; see Theorem~\ref{thm:MainTheorem}.

Note that if $N(\mathcal{F}, \eps)$ is a valid complexity bound, then so is any
$N'(\mathcal{F}, \eps) \geq N(\mathcal{F}, \eps)$.
Therefore, after reviewing the existing complexity bounds,
we also discuss their \emph{optimality}.

In the final section of the paper, we prove Theorem~\ref{thm:MainTheorem}.

\section{Approximation results using ReLU networks}
\label{sec:OverviewOfApproximationTheory}

In this section, we review the existing findings concerning the approximation properties
of ReLU networks.
In doing so, we first focus on approximation using ReLU networks with a fixed depth,
and then see what changes when the depth of the networks is allowed to grow
with the approximation accuracy.

First of all, however, we formally define neural networks
and discuss how to measure their complexity.
Here and in the remainder of the paper, we write $\FirstN{m} := \{1,\dots,m\}$.

\begin{definition}\label{def:NeuralNetwork}
  A \emph{neural network} $\Phi$ with $L = L(\Phi) \in \N$ layers, input dimension $d \in \N$
  and output dimension $k \in \N$ is a tuple $\Phi = \big( (A_1,b_1), \dots, (A_L, b_L) \big)$,
  where $A_\ell \in \R^{N_\ell \times N_{\ell - 1}}$ and $b_\ell \in \R^{N_\ell}$
  for $\ell \in \FirstN{L}$ and where $N_0 = d$ and $N_L = k$.

  Given $\varrho : \R \to \R$ (called the \emph{activation function}),
  the \emph{$\varrho$-realization} of $\Phi$ is the function
  $\Realization_\varrho (\Phi) : \R^d \to \R^k, x \mapsto x_L$, where
  $x_0 := x \in \R^{d}$ and $x_L := A_L \, x_{L-1} + b_L \in \R^{k}$, while\vspace{-0.1cm}
  \[
    x_\ell := \varrho (A_\ell \, x_{\ell - 1} + b_\ell) \in \R^{N_\ell}
    \quad \text{for} \quad \ell \in \FirstN{L-1},
    \vspace{-0.1cm}
  \]
  where $\varrho(y) = \big(\varrho(y_1), \dots, \varrho(y_n)\big)$ for $y = (y_1,\dots,y_n) \in \R^n$.

  The \emph{number of neurons} of $\Phi$ is ${N(\Phi) := \sum_{\ell = 0}^L N_\ell \in \N}$,
  while the \emph{number of (nonzero) weights} of $\Phi$ is given by
  $W(\Phi) := \sum_{i=1}^L \big( \|A_i\|_{\ell^0} + \|b_i\|_{\ell^0} \big)$,
  with $\|A\|_{\ell^0}$ denoting the number of nonzero entries of a matrix or vector $A$.

  Given $\Omega \subset \R$, we say that \emph{all weights of $\Phi$ belong to $\Omega$}
  if all entries of the matrices $A_1,\dots,A_L$ and the vectors $b_1,\dots,b_L$ belong to $\Omega$.
  Given $s \in \N$ and $\eps \in (0,\tfrac{1}{2})$, we say that the network $\Phi$ is
  \emph{$(s,\eps)$-quantized}, if all weights of $\Phi$ belong to the set
  $[-\eps^{-s}, \eps^{-s}] \cap 2^{-s \lceil \log_2 (1/\eps) \rceil} \Z$.
\end{definition}

Weight-quantization is a further notion of complexity, which---when combined with bounds
on the number of network weights---restricts the number of bits needed to encode the network.

In the remainder of the paper, we will only consider the ReLU activation function
$\varrho : \R \to \R, x \mapsto \max \{0,x\}$.

\subsection{$L^p$ approximation using fixed-depth networks}
\label{sub:FixedDepthUpperBounds}

The following is the main existing result concerning approximation of $C^\beta$ functions
using \emph{fixed-depth} ReLU networks.

\pagebreak

\begin{theorem}\label{thm:OurSmoothFunctionApproximationTheorem}%
  (\cite[Theorem A.9]{OurReLUPaper})
  Let $\beta,B,p \in (0,\infty)$, 
  ${d \in \N}$, and $Q := [-\tfrac{1}{2}, \tfrac{1}{2}]^d$.
  There are $C > 0$ and $s \in \N$ (depending on $d,\beta,B,p$)
  such that for $\eps \in (0,\tfrac{1}{2})$ and ${\vphantom{\vrule height 0.4cm}f \in C^\beta (Q)}$
  with $\|f\|_{C^\beta} \leq B \vphantom{\sum_j}$,
  there is an $(s,\eps)$-quantized network $\Phi_\eps^f$
  with $L(\Phi_\eps^f) \leq 11 + (1 + \lceil \log_2 \beta \rceil) (11 + \tfrac{\beta}{d})$,
  and with $\|f - \Realization_\varrho (\Phi_\eps^f)\|_{L^p} \leq C \eps$
  and $N(\Phi_\eps^f) \lesssim W(\Phi_\eps^f) \leq C \, \eps^{-d/\beta}$.
\end{theorem}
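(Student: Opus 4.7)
The strategy is the standard grid-based localization paired with local Taylor-polynomial approximation, realized by ReLU subnetworks whose depth depends on $\beta$ and $d$ but not on $\eps$. The key to keeping the depth fixed is that the error is measured in $L^p$ rather than uniformly, so errors on sets of small measure are tolerable. First I would tile $Q$ by a grid of $N \sim \eps^{-d/\beta}$ axis-aligned cubes $\{Q_j\}$ of side length $h \sim \eps^{1/\beta}$, together with a partition of unity $\{\psi_j\}$ subordinate to a mildly enlarged family $\{2 Q_j\}$; each $\psi_j$ is a tensor product of one-dimensional ReLU hat functions and is realized \emph{exactly} by a fixed-depth subnetwork whose size depends only on $d$. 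On each cube I take the Taylor polynomial $p_j$ of $f$ about the centre $x_j$ of order $m := \lceil \beta \rceil - 1$; by the $C^\beta$-hypothesis, $\|f - p_j\|_{L^\infty(2 Q_j)} \lesssim B h^\beta \sim \eps$, so that $g := \sum_j \psi_j \, p_j$ is an $L^\infty$-approximation to $f$ on $Q$ of order $\eps$.

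The main step, and the one I expect to be the principal obstacle, is realizing $g$ by a ReLU network of the advertised \emph{fixed} depth. Each $p_j$ is a fixed-size linear combination of $d$-variate monomials of degree at most $m \leq \beta$, and a monomial of degree $k$ can be evaluated by a binary tree of $\lceil \log_2 k \rceil \leq \lceil \log_2 \beta \rceil$ successive bilinear multiplications, each implemented via $uv = \tfrac{1}{4}\bigl((u+v)^2 - (u-v)^2\bigr)$ together with a piecewise-linear interpolation of $t \mapsto t^2$. The Yarotsky-style gadget achieving uniform error $\eps$ requires depth $\sim \log(1/\eps)$, which breaks the fixed-depth goal; instead I would use a gadget of constant depth whose uniform error on bounded inputs is $O(1)$ but whose exceptional set---on which this error is actually attained---has measure $\lesssim \eps^{(d/\beta) + p}$ per cube. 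Summed over the $N \sim \eps^{-d/\beta}$ cubes, the total exceptional mass is bounded by $\eps^p$, which, combined with the uniform boundedness of $f$, $\psi_j$, and $p_j$ on $Q$, yields an $L^p$-error contribution of order $\eps$. A careful per-layer depth accounting of this tree of approximate multiplications, composed with the partition-of-unity multiplications and the final linear combination, then produces the stated bound $L(\Phi_\eps^f) \leq 11 + (1 + \lceil \log_2 \beta \rceil)(11 + \beta/d)$ and a total weight count of $O(N) = O(\eps^{-d/\beta})$.

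Finally I quantize. All weights of the constructed network are bounded in absolute value by a fixed polynomial in $\eps^{-1}$ whose exponent depends only on $d, \beta, B, p$, so it suffices to round each weight to the nearest element of $2^{-s \lceil \log_2 (1/\eps) \rceil} \, \Z$ for $s = s(d, \beta, B, p)$ chosen large enough. A standard Lipschitz-in-weights perturbation estimate for ReLU networks of bounded depth shows that this rounding changes the $L^p(Q)$-realization by at most $O(\eps)$, using only that the realization of each fixed-depth block is Lipschitz in its parameters on bounded inputs. Combining the three error sources---Taylor truncation, approximate multiplication, and weight quantization---yields $\|f - \Realization_\varrho(\Phi_\eps^f)\|_{L^p} \leq C \eps$ with the required depth, weight count, and quantization level.
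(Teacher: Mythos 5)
Your overall skeleton (grid of $\sim \eps^{-d/\beta}$ cubes of side $\eps^{1/\beta}$, local Taylor polynomials, fixed-depth implementation, final weight rounding) matches the architecture behind this theorem, which this paper does not reprove but imports from \cite[Theorem A.9]{OurReLUPaper}; the proof of Theorem~\ref{thm:MainTheorem} in Section~\ref{sec:MainProof} displays the same ingredients (Lemmas~\ref{lem:HoelderTaylorApproximation}, \ref{lem:PolynomImplementation}, \ref{lem:Localization} and the quantization lemma \cite[Lemma~3.7]{ApproximationWithSparselyConnectedDNN}). However, your central mechanism has a genuine gap, in two places. First, a tensor product of one-dimensional hat functions is \emph{not} exactly realizable by a ReLU network for $d \geq 2$: ReLU realizations are continuous piecewise affine, while a product of $d$ hats is piecewise multilinear, so your partition of unity already requires approximate multiplication. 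Second, and more importantly, the constant-depth multiplication gadget you postulate---uniform error $O(1)$, but attained only on an exceptional set of measure $\lesssim \eps^{(d/\beta)+p}$ per cube---does not exist with $O(1)$ weights per cube. Any ReLU realization is piecewise affine with a number of affine pieces controlled by its size; on each piece its deviation from $t \mapsto t^2$ (or from $(u,v) \mapsto uv$) is a nondegenerate quadratic, so the set where the error is $\leq \eps$ has measure $\lesssim (\#\text{pieces}) \cdot \sqrt{\eps}$. Since your $L^p$ bookkeeping needs pointwise accuracy $\approx \eps$ off the exceptional set (whose measure must stay $\lesssim \eps^p$ in total), each approximate multiplication is forced to have $\gtrsim \eps^{-1/2}$ pieces; building such a gadget separately in each of the $\sim \eps^{-d/\beta}$ cubes destroys the weight bound $W(\Phi_\eps^f) \lesssim \eps^{-d/\beta}$. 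In short, the measure-based tolerance cannot absorb the error of approximate multiplication, because that error is spread over the whole domain rather than concentrated on a thin set.

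The actual proof exploits the $L^p$ tolerance only where the error genuinely lives on a thin set, namely in the \emph{localization} step: the networks $\Lambda_{\eps,B}^{(a,b)}$ of Lemma~\ref{lem:Localization} multiply a value by the indicator of a cube up to an error supported in an $O(2^{-kN})$-neighborhood of the cube boundary (this is what Proposition~\ref{prop:PartitionOfUnityEndPoints}, resp.\ its Lebesgue-measure analogue in \cite{OurReLUPaper}, quantifies). The curved part---simultaneous evaluation of all $m \sim \eps^{-d/\beta}$ local Taylor polynomials with \emph{uniform} accuracy $\eps$ at fixed depth $\leq 1 + (1 + \lceil \log_2 \beta\rceil)(11 + \beta/d)$ and only $\lesssim m + \eps^{-d/\beta}$ weights---is precisely the nontrivial content of \cite[Lemma A.5]{OurReLUPaper} (Lemma~\ref{lem:PolynomImplementation} here), which trades depth for width and shares the approximate-squaring machinery across all cubes instead of instantiating a multiplication tree per cube; your sketch replaces this key lemma by a gadget that cannot perform as claimed. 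Your final quantization step (rounding weights bounded by a fixed power of $\eps^{-1}$ to the grid $2^{-s\lceil \log_2(1/\eps)\rceil}\Z$ and using Lipschitz dependence on the parameters at fixed depth) is sound and is exactly how the paper proceeds via \cite[Lemma~3.7]{ApproximationWithSparselyConnectedDNN}.
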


In Section~\ref{sec:MainProof}, we will prove the following generalization:

\begin{theorem}\label{thm:MainTheorem}
  Let $d \in \N$ and $\beta, B, p \in (0,\infty)$, and let $\mu$ be a finite Borel measure
  on ${Q := [-\tfrac{1}{2}, \tfrac{1}{2}]^d}$.
  There are $C > 0$ and $s \in \N$ (depending on $d,p,\beta,B,\mu$)
  such that for $\eps \in (0,\tfrac{1}{2})$ and
  $f \in C^\beta (Q)$ with ${\|f\|_{C^\beta} \leq B}$,
  there is an $(s,\eps)$-quantized network $\Phi_{\eps}^f$
  with $L(\Phi_{\eps}^f) \leq 7 + (1 + \lceil \log_2 \beta \rceil) (11 + \tfrac{\beta}{d})$,
  and
  \[
    \|f - \Realization_\varrho (\Phi_{\eps}^f)\|_{L^p(\mu)} \leq C \eps
    \quad\!\!\! \text{and} \quad\!\!\!
    N(\Phi_\eps^f) \lesssim W (\Phi_{\eps}^f) \leq C \, \eps^{-d/\beta} .
    \vspace{-0.1cm}
  \]
\end{theorem}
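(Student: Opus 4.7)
The plan is to adapt the construction underlying Theorem \ref{thm:OurSmoothFunctionApproximationTheorem} by introducing a shift of the underlying grid so that the region where pointwise accuracy fails has small \(\mu\)-measure. First, I unpack the construction from the proof of Theorem \ref{thm:OurSmoothFunctionApproximationTheorem}: at accuracy \(\eps\) one builds a uniform Cartesian grid on \(Q\) with spacing \(h \asymp \eps^{1/\beta}\), approximates \(f\) on each cell by a Taylor polynomial, and realizes the gluing by ReLU sub-networks that implement an approximate partition of unity and an approximate multiplier. Inspecting this construction, the pointwise error of \(\Realization_\varrho(\Phi_\eps^f)\) decomposes as a uniform bound \(\lesssim \eps\) outside of a ``bad set'' \(B\) and an \(L^\infty\) bound \(\lesssim B\) on \(B\); crucially, \(B\) is contained in a slab of width \(\delta\) around the grid hyperplanes, and \(\delta\) can be driven as small as desired at the cost of enlarging the constants (and possibly the weight bounds) in the multiplier sub-network.

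Next, I parametrize the construction by a shift \(t \in [0,h]^d\) of the underlying grid, obtaining a network \(\Phi_\eps^{f,t}\) with bad set \(B_t\). For each fixed \(x \in Q\), the set \(\{t \in [0,h]^d : x \in B_t\}\) is a union of at most \(d\) slabs of thickness \(\delta\), so its Lebesgue measure is bounded by \(2d\delta h^{d-1}\). Fubini yields
\[
  \int_{[0,h]^d} \!\! \mu(B_t) \, dt \;=\; \int_Q \lambda\bigl(\{t : x \in B_t\}\bigr)\, d\mu(x) \;\leq\; 2d\delta h^{d-1}\mu(Q),
\]
so by averaging there exists \(t^* \in [0,h]^d\) with \(\mu(B_{t^*}) \leq 2d\delta\,\mu(Q)/h\). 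Choosing \(\delta/h \lesssim_\mu \eps^p\) and appending a two-layer clipping sub-network to enforce \(\|\Realization_\varrho(\Phi_\eps^{f,t^*})\|_\infty \leq B\) on all of \(Q\), I split
\[
  \|f - \Realization_\varrho(\Phi_\eps^{f,t^*})\|_{L^p(\mu)}^p \;\leq\; \eps^p \mu(Q) + (2B)^p \mu(B_{t^*}) \;\lesssim\; \eps^p.
\]

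Finally, I check that the shift only perturbs the bias vector of the first affine layer of the base network, so the depth, the numbers of neurons and weights, and—after rounding \(t^*\) to the quantization lattice \(2^{-s\lceil\log_2(1/\eps)\rceil}\Z\)—the \((s,\eps)\)-quantization property are inherited from Theorem \ref{thm:OurSmoothFunctionApproximationTheorem}, with the constants \(C\) and \(s\) acquiring an additional dependence on \(\mu(Q)\).

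The principal obstacle is the first step: the argument presupposes that the construction behind the cited theorem has a grid-based structure whose error is uniformly controlled except on a thin, tunable neighborhood of the grid hyperplanes. A secondary technical point is that the shift \(t^*\) produced by averaging is a priori unstructured, so one must show that rounding \(t^*\) onto the quantization lattice perturbs \(B_{t^*}\) and the interior error by only a negligible amount; this amounts to absorbing a small rounding error into the already-tunable width \(\delta\).
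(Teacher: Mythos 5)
Your core idea is the paper's own: Proposition~\ref{prop:PartitionOfUnityEndPoints} is precisely a Fubini/averaging argument over shifts of the grid, showing that for a suitably chosen shift the union of the thin boundary strips (where the localization networks of Lemma~\ref{lem:Localization} are inaccurate) has small $\mu$-measure, and the remainder of the proof assembles local Taylor polynomials on the shifted cells exactly as you describe; your presupposition that the construction behind Theorem~\ref{thm:OurSmoothFunctionApproximationTheorem} has a grid structure with a tunable thin bad set is correct (that is what Lemma~\ref{lem:Localization} provides, at the cost of weights of size roughly one over the strip width). The differences are organizational. The paper integrates over the shift \emph{and} sums over all scales $N$ at once (strip width $2^{-kN}$ with $k=d+1+p\gamma$), so a single almost-every shift $a$ works for every $\eps$ with a constant $C_a$, whereas you re-select $t^\ast$ for each $\eps$ with $\delta/h\lesssim\eps^p$; both yield constants depending only on the admissible parameters, and your per-$\eps$ version even handles $p<1$ without comment. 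For quantization the paper never rounds the shift: it applies the weight-quantization lemma of \cite{ApproximationWithSparselyConnectedDNN} at the very end, which perturbs the realization by at most $\eps$ in the sup norm, so the geometry of the bad set need not be re-examined; your rounding of $t^\ast$ is repairable as you indicate (absorb the rounding into an enlarged $\delta$), but the paper's route avoids the issue entirely. Two constant-level caveats: the appended clipping sub-network is unnecessary, since the localization network of Lemma~\ref{lem:Localization} already satisfies $|T(x,y)|\le B$ and vanishes off the cell, which is exactly what yields the $O(B)$ bound on the bad strips; and ``inheriting'' the architecture of Theorem~\ref{thm:OurSmoothFunctionApproximationTheorem} and then appending layers gives depth at least $11+(1+\lceil\log_2\beta\rceil)(11+\tfrac{\beta}{d})$ plus the clipping layers, which does not deliver the bound $7+(1+\lceil\log_2\beta\rceil)(11+\tfrac{\beta}{d})$ claimed in Theorem~\ref{thm:MainTheorem}---the paper gets the smaller constant by recombining the $4$-layer localization networks with the polynomial network of Lemma~\ref{lem:PolynomImplementation} directly rather than black-boxing the earlier theorem. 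These are quantitative, not conceptual, discrepancies.
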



The optimality of the complexity bound $W(\Phi_\eps^f) \lesssim \eps^{-d/\beta}$
will be discussed in detail in Section~\ref{sec:Optimality}.
A related question concerns the optimality of the \emph{depth} of the networks.
The next result shows that---up to logarithmic factors---the depth of the networks
in Theorems~\ref{thm:OurSmoothFunctionApproximationTheorem} and \ref{thm:MainTheorem}
is indeed optimal.

\begin{theorem}\label{thm:SmoothFunctionLayerApproximationRate}%
  (\cite[Theorem C.6]{OurReLUPaper}; see \cite{SafranShamir} for the case $p=2$)

  Let $\emptyset \neq \Omega \subset \R^d$ be open, bounded, and connected.
  Let $f \in C^3(\Omega)$ be nonlinear and $p \in (0,\infty)$.
  There is $C_{f,p,d} > 0$ such that for any neural network $\Phi$ of depth $L(\Phi)$, we have
  \vspace{-0.15cm}
  \[
    \|f - \Realization_\varrho (\Phi)\|_{L^p}
    \geq C_{f,p,d} \cdot \big( 1 + \min\{ N(\Phi), W(\Phi) \} \big)^{-2 L(\Phi)} .
  \]
\end{theorem}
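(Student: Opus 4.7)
The plan is to reduce the problem to one dimension via the nonlinearity of $f$, count the affine pieces produced by $\Realization_\varrho(\Phi)$ when restricted to a line, and combine this with a curvature-based lower bound on how well a piecewise linear function can approximate a $C^2$ function with nonzero second derivative. Since $f \in C^3(\Omega)$ is nonlinear and $\Omega$ is connected, the Hessian of $f$ cannot vanish identically, so there exist $x_0 \in \Omega$ and a unit vector $v \in \R^d$ with $\partial_v^2 f(x_0) \neq 0$. By continuity one can choose $\delta, \delta', c > 0$ such that the tube $T := \{x_0 + tv + w : t \in [-\delta,\delta],\ w \in v^\perp, \|w\| \leq \delta'\}$ lies in $\Omega$, and $|g_w''(t)| \geq c$ on $[-\delta,\delta]$ for every admissible $w$, where $g_w(t) := f(x_0 + tv + w)$.

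Next, I would bound the number of affine pieces of the restriction of $\Realization_\varrho(\Phi)$ to any line. Writing $N_0,\dots,N_L$ for the layer widths and inducting on $\ell$, the map $A_\ell(\cdot) + b_\ell$ preserves the piece structure on a line, while each of the $N_\ell$ ReLU coordinates introduces at most one new breakpoint per existing piece; taking the union of breakpoints yields the recursion $K_\ell \leq (N_\ell + 1) K_{\ell - 1}$ with $K_0 = 1$. Applying AM--GM,
\[
  K_L \leq \prod_{\ell = 1}^L (N_\ell + 1) \leq \Big( 1 + \tfrac{1}{L} \sum_{\ell = 1}^L N_\ell \Big)^L \leq \big( 1 + N(\Phi) \big)^L .
\]
A neuron with a vanishing row in $A_\ell$ outputs a constant and contributes no breakpoint, so the number of active neurons in layer $\ell$ is at most $\|A_\ell\|_{\ell^0}$; the identical argument then gives $K_L \leq (1 + W(\Phi))^L$, and hence $K_L \leq (1 + \min\{N(\Phi), W(\Phi)\})^L$.

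The last ingredient is a one-dimensional lower bound: if $h : [-\delta,\delta] \to \R$ is continuous piecewise linear with at most $K$ pieces and $|g''| \geq c$ on $[-\delta,\delta]$, pigeonhole forces some piece of $h$ to have length $\geq 2\delta/K$, and Taylor-expanding $g$ around its midpoint combined with the standard second-order interpolation estimate gives $|g - h| \gtrsim c (\delta/K)^2$ on a sub-interval of length $\asymp \delta/K$, whence $\|g - h\|_{L^p([-\delta,\delta])} \gtrsim c\, \delta^{2 + 1/p} K^{-2}$. Applying this slicewise to $g_w$ and to the line-restriction of $\Realization_\varrho(\Phi)$ (which has at most $K_L$ pieces on $[-\delta,\delta]$), then integrating the $p$th power over $w \in v^\perp$ via Fubini, produces
\[
  \| f - \Realization_\varrho(\Phi) \|_{L^p(\Omega)}^p \geq C \, \delta'^{\,d-1} K_L^{-2p} ,
\]
which combined with the piece count finishes the estimate.

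The main obstacle is the piece-counting step: carefully tracking how breakpoints accumulate and can be shared among the coordinates of a hidden layer, and verifying that a zero row of $A_\ell$ genuinely removes the corresponding neuron's breakpoint contribution, so that the $W(\Phi)$ bound really follows. The univariate curvature lower bound and the Fubini assembly are routine once the tube $T$ has been fixed; connectedness of $\Omega$ is used only once, to rule out the degenerate case where $f$ would be affine and the curvature argument would fail.
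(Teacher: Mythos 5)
The paper does not actually prove Theorem~\ref{thm:SmoothFunctionLayerApproximationRate}; it imports it from \cite[Theorem C.6]{OurReLUPaper} and \cite{SafranShamir}. Your strategy --- pick a point and direction where $\partial_v^2 f \neq 0$ (using connectedness to rule out $D^2 f \equiv 0$), bound the number of affine pieces of $\Realization_\varrho(\Phi)$ on a line by $\prod_\ell (1+N_\ell) \le (1+\min\{N(\Phi),W(\Phi)\})^{L}$ via AM--GM and the zero-row observation, and then invoke a univariate curvature lower bound, assembled by Fubini over a tube --- is exactly the strategy of those cited proofs, and the piece-counting recursion $K_\ell \le (N_\ell+1)K_{\ell-1}$ is correct.

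There is one genuine (though easily repaired) gap, in the univariate step. If you only use the single longest piece, of length $\rho \ge 2\delta/K$, you get $\|g-h\|_{L^p}^p \gtrsim (c\rho^2)^p\rho$, i.e.\ $\|g-h\|_{L^p} \gtrsim c\,\delta^{2+1/p}K^{-(2+1/p)}$, \emph{not} $K^{-2}$ as you wrote; the exponents $\delta^{2+1/p}$ and $K^{-2}$ in your displayed conclusion are dimensionally inconsistent. This matters: combined with $K \le (1+\min\{N,W\})^{L-1}$ it yields the exponent $-(L-1)(2+\tfrac1p)$, which is strictly worse than the claimed $-2L$ as soon as $L > 1+2p$ (recall $p$ may be small). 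The fix is to sum over \emph{all} pieces rather than pigeonhole: each piece $I_j$ of length $\rho_j$ contributes $\int_{I_j}|g-h|^p \gtrsim c^p\rho_j^{2p+1}$, and since $t\mapsto t^{2p+1}$ is convex and $\sum_j\rho_j = 2\delta$, Jensen gives $\sum_j \rho_j^{2p+1} \ge K\,(2\delta/K)^{2p+1} = (2\delta)^{2p+1}K^{-2p}$, which recovers $\|g-h\|_{L^p} \gtrsim c\,\delta^{2+1/p}K^{-2}$ and hence the stated $-2L$ exponent. (For the per-piece bound itself, note that with $(g-h)''\ge c$ the sublevel set $\{|g-h|<\eps\}$ meets $I_j$ in at most two intervals, each of length $\lesssim\sqrt{\eps/c}$ by a second-difference argument, so $|g-h|\gtrsim c\rho_j^2$ on a subset of $I_j$ of measure $\gtrsim \rho_j$; this also covers the quasi-norm case $p<1$.) With that correction your argument goes through.
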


Thus, to attain $\|f - \Realization_\varrho (\Phi^f_\eps)\|_{L^p} \lesssim \eps$
subject to the complexity bound $W(\Phi_\eps^f) \lesssim \eps^{-d/\beta}$,
the networks $\Phi_\eps^f$ must satisfy $L(\Phi_\eps^f) \geq \beta/2d$,
at least for $\eps > 0$ small enough.

In a nutshell, these results show that \emph{ReLU networks achieve better approximation rates
for smoother functions. To attain these better rates, however, one has to use deeper networks.}

\paragraph*{Further results}

One can also derive $L^p$ approximation rates for a certain class of \emph{discontinuous} functions;
see \cite{OurReLUPaper}.
Further, the presented results for fully connected networks
are equivalent to approximation results for certain simplified \emph{convolutional} networks
\cite{OurCNNPaper} that do not employ pooling operations.

We close our tour of approximation results using fixed depth networks
with the following result.

\begin{proposition}\label{thm:YarotskyUniformConstantDepth}(see \cite[Proposition~1]{YarotskyOptimalApproximation})
  Let $d \in \N$ and $\beta \in (0,1]$, and $Q := [-\tfrac{1}{2}, \tfrac{1}{2}]^d$.
  There is $C = C(d,\beta) > 0$ such that for each $f \in C^\beta(Q)$ and $\eps \in (0,\tfrac{1}{2})$,
  there is a neural network $\Phi_\eps^f$ with $L = L(d)$ layers such that
  \vspace{-0.15cm}
  \[
    \|f - \Realization_\varrho (\Phi_\eps^f)\|_{\sup} \leq \eps \, \|f\|_{C^\beta}
    \!\! \quad \text{and} \quad \!\!
    N(\Phi^f_\eps) \! \lesssim \! W(\Phi_\eps^f) \! \leq \! \tfrac{C}{\eps^{d/\beta}} . 
  \]
\end{proposition}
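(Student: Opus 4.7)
The plan is Yarotsky's grid interpolation argument tailored to Hölder exponents $\beta\in(0,1]$. Because $f$ is $\beta$-Hölder, its oscillation on any cube of side $h$ is at most $\|f\|_{C^\beta}(\sqrt d\,h)^\beta$, so the piecewise multilinear interpolant $\tilde f(x)=\sum_{v\in V}\lambda_v(x)f(v)$ on a Cartesian grid of mesh $h=\eps^{1/\beta}/\sqrt d$ satisfies $\|f-\tilde f\|_{\sup}\le \eps\|f\|_{C^\beta}$, directly by applying Hölder continuity to each convex combination $\sum_v\lambda_v(x)=1$ (only vertices $v$ within $\ell^2$-distance $h\sqrt d$ of $x$ contribute). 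The vertex set $V=(h\Z)^d\cap Q$ has cardinality $\asymp \eps^{-d/\beta}$, which already matches the claimed weight budget.

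The substantive step is realising $\tilde f$ exactly by a ReLU network whose depth depends only on $d$. I would follow Yarotsky and build everything from the $\ell^\infty$-pyramid $\pi_v(x):=\max\{0,\,1-\|x-v\|_\infty/h\}$: since $\|y\|_\infty=\max_j\max\{y_j,-y_j\}$ is the maximum of $2d$ ReLU expressions and the identity $\max(a,b)=\Realization_\varrho(a-b)+b$ costs a single layer, each $\pi_v$ is exactly computable by a ReLU sub-network of depth $O(\log d)$. A hat function $\varphi_v$ localised on the grid cubes meeting at $v$ is continuous piecewise linear on $\R^d$ with $O(d)$ affine pieces, and can therefore be written as a fixed-depth combination of translates and rescalings of the pyramids. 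Placing all $|V|$ such hats in parallel and closing with a single affine layer that pairs them with the sample values $f(v)$ yields $\tilde f$ as a ReLU network of depth $L=L(d)$ and $O(d\cdot|V|)=O(\eps^{-d/\beta})$ nonzero weights.

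The main obstacle is manufacturing a genuine partition of unity $\{\varphi_v\}_{v\in V}$ from $\ell^\infty$-pyramids in depth depending only on $d$. Tensor-product hats $\prod_j\varphi^{(1)}(x_j-v_j)$, with $\varphi^{(1)}$ a univariate tent, are the natural candidate, but realising the multivariate product exactly by ReLU would force the depth to grow like $\log(1/\eps)$; the naive sum $\sum_v\pi_v$ is likewise not identically $1$ on $Q$. Yarotsky's resolution, which I would reproduce, is to assemble each $\varphi_v$ as a bounded-depth linear combination of shifted pyramids that sum \emph{exactly} to $1$ on $Q$ (the extra layers needed depend only on $d$); once this is in place, the asserted complexity and depth estimates follow by counting basis elements and tracking the constant overhead of the pyramid sub-networks.
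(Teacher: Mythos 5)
Since the paper itself gives no proof of this proposition (it is quoted from Yarotsky), your plan has to be measured against the cited construction, and at the level of strategy it does match it: interpolate $f$ at $\asymp\eps^{-d/\beta}$ grid points using a locally supported, nodal partition of unity that a ReLU network of depth depending only on $d$ realizes exactly, then count weights. The error estimate via Hölder continuity applied to convex combinations and the cardinality count $|V|\asymp\eps^{-d/\beta}$ are fine.

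The gap is precisely the step you defer. For $d\ge 2$ the piecewise multilinear interpolant is not piecewise affine (its pieces contain products $x_i x_j$), so it cannot be realized \emph{exactly} by any ReLU network of any depth; for the same reason a tensor-product hat can never be written as a finite linear combination of the $\ell^\infty$-pyramids $\pi_v$, because every such combination is continuous and piecewise linear. (Nor is a tensor hat ``CPwL with $O(d)$ affine pieces''; it is piecewise multilinear on the $2^d$ cells meeting at $v$.) You correctly note that $\sum_v\pi_v\not\equiv 1$ and that exact multiplication is unavailable in fixed depth, but you then replace the needed construction by an appeal to ``Yarotsky's resolution, which I would reproduce''; as you paraphrase it (recombining shifted pyramids into hats that sum exactly to $1$ and reproduce the multilinear interpolant) it is exactly what cannot be done, and no construction is actually supplied—yet this is the heart of the proof. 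The standard repair, and in essence the cited one, is to abandon multilinear interpolation in favor of piecewise \emph{linear} interpolation on a simplicial subdivision of the grid (Kuhn/Freudenthal triangulation of each cell). Its nodal basis functions are genuinely CPwL with a number of pieces depending only on $d$, satisfy $\varphi_v(v')=\delta_{v,v'}$ and $\sum_v\varphi_v\equiv 1$, and admit an explicit fixed-depth ReLU realization: for mesh size $1$ the hat at the origin is $\varrho\bigl(1-\max(0,x_1,\dots,x_d)-\max(0,-x_1,\dots,-x_d)\bigr)$, computable with depth $O(\log d)$ and $\operatorname{poly}(d)$ weights via a binary max-tree, and then rescaled and translated to the grid $hV$. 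With these hats in place of your $\varphi_v$, your convex-combination error bound (now over simplices of diameter $\lesssim\sqrt{d}\,h$ with $h\asymp\eps^{1/\beta}$) and your weight count $C(d)\,\eps^{-d/\beta}$ go through and yield the proposition, including the depth bound $L=L(d)$.
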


Though not explicitly stated in \cite{YarotskyOptimalApproximation},
one can show that the same statement holds for certain $(s,\eps)$-quantized networks $\Phi_\eps^f$,
where $s = s(d,B)$ and $\|f\|_{C^\beta} \leq B$.

Such a uniform approximation is much stronger than approximation in $L^p(\mu)$.
It should be noted, however, that the above result only applies for the
``low smoothness, slow approximation regime'' $\beta \in (0,1]$
where a piecewise affine approximation yields an optimal error.
For $\beta > 1$ it is an open problem whether the bounds in
Theorems~\ref{thm:OurSmoothFunctionApproximationTheorem} and \ref{thm:MainTheorem}
also hold for \emph{uniform} approximation using fixed-depth networks.




\subsection{Uniform approximation using networks of growing depth}
\label{sub:GrowingDepthUpperBounds}


While it is open whether fixed-depth networks
can satisfy $\|f - \Realization_\varrho (\Phi_\eps^f)\|_{\sup} \lesssim \eps$
and $W(\Phi_\eps^f) \lesssim \eps^{-d/\beta}$
for $f \in C^\beta$ and $\beta > 1$,
this \emph{is} possible with a mild depth-growth as $\eps \to 0$.

\begin{theorem}\label{thm:YarotskyUniformApproximation}(see \cite[Theorem~1]{YarotskyErrorBounds})
  Let $d , k \in \N$ and set $Q := [-\tfrac{1}{2}, \tfrac{1}{2}]^d$.
  There is $C = C(d,k) > 0$ such that for any $\eps \in (0,\tfrac{1}{2})$ and $f \in C^k (Q)$
  with ${\|f\|_{C^k} \leq 1}$, there is a network $\Phi_\eps^f$
  satisfying
  \(
    \vphantom{\vline height 0.4cm}
    N(\Phi_\eps^f) \lesssim W(\Phi_\eps^f) \leq C \big( 1 + \ln(1/\eps) \big) \, \eps^{-d/k}
  \)
  and
  $\vphantom{\vline height 0.4cm}
  \|f - \Realization_\varrho (\Phi_\eps^f)\|_{\sup} \leq \eps$,
  as well as $L(\Phi_\eps^f) \leq C \big( 1 + \ln(1/\eps) \big)$.
\end{theorem}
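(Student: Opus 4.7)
The plan is to follow Yarotsky's local Taylor approach: cover $Q$ by a grid of overlapping pieces, form a partition of unity from piecewise-linear bump functions, multiply each bump by the local Taylor polynomial of $f$, and implement everything using a ReLU network whose only approximation is in the multiplication operation.

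First I would fix a grid spacing $h \sim \eps^{1/k}$ and set up the lattice of centers $\nu \in (h \Z)^d \cap Q$, noting that there are at most $O(h^{-d}) = O(\eps^{-d/k})$ of them. For each $\nu$ I would define the usual product-of-tent partition of unity $\phi_\nu(x) = \prod_{i=1}^d \varrho(1 - h^{-1}|x_i - \nu_i|)_+$, which is piecewise affine, is supported in the cube $\{x : |x-\nu|_\infty \leq h\}$, satisfies $\sum_\nu \phi_\nu \equiv 1$ on $Q$, and is \emph{exactly} representable by a small fixed-size ReLU subnetwork (since $|y|$ and thus the tent function can be written using two ReLUs). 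I would then define the local Taylor polynomial $P_\nu(x) = \sum_{|\alpha|<k} \tfrac{\partial^\alpha f(\nu)}{\alpha!} (x-\nu)^\alpha$ and the target surrogate $\tilde f(x) = \sum_\nu \phi_\nu(x) P_\nu(x)$. A standard Taylor remainder argument, together with the partition-of-unity property and the fact that at any $x$ only $2^d$ terms are nonzero, gives $\|f - \tilde f\|_{\sup} \lesssim h^k \|f\|_{C^k} \lesssim \eps$.

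Next I would construct a ReLU network $\Phi_\eps^f$ realizing a function $g_\eps$ that approximates $\tilde f$ uniformly. The key ingredient, which is the main technical obstacle, is Yarotsky's \emph{approximate multiplication} lemma: for every $\delta \in (0,1/2)$ there is a ReLU network $M_\delta$ of depth and weight count $O(\log(1/\delta))$ with $|M_\delta(y_1,y_2) - y_1 y_2| \leq \delta$ on $[-1,1]^2$. This is built from a composition of sawtooth-based approximations of the square function $y\mapsto y^2$, using the identity $y_1 y_2 = \tfrac12[(y_1+y_2)^2 - y_1^2 - y_2^2]$; iterating $M_\delta$ in a balanced binary tree of depth $\lceil \log_2(d+k)\rceil$ gives approximate products of up to $k+d$ inputs with error still $O(\delta)$ at cost $O(\log(1/\delta))$ in depth and weights. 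I would use this to approximate, for each $\nu$ and each multi-index $\alpha$ with $|\alpha|<k$, the single scalar $\phi_\nu(x) \cdot (x-\nu)^\alpha$ appearing in $\phi_\nu(x) P_\nu(x)$, choosing $\delta \sim \eps$.

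Finally I would assemble the full network in parallel over the $O(\eps^{-d/k})$ centers $\nu$, scale each approximate product by the precomputed coefficient $\partial^\alpha f(\nu)/\alpha!$ (bounded since $\|f\|_{C^k}\leq 1$), and sum. Because only $2^d$ bumps overlap at any point, the pointwise error between $g_\eps$ and $\tilde f$ is at most a constant multiple of $\delta$, giving $\|f - g_\eps\|_{\sup} \lesssim \eps$ after adjusting constants. The bookkeeping yields depth $L(\Phi_\eps^f) = O(\log(1/\eps))$ (the multiplication tree dominates) and a total weight count of $O(\eps^{-d/k}) \cdot O(\log(1/\eps)) = O(\eps^{-d/k} \log(1/\eps))$, matching the claim. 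The principal obstacle is the multiplication lemma and the careful accounting needed to ensure that assembling $O(\eps^{-d/k})$ independent multiplication subnetworks in parallel preserves both the claimed weight bound and the $O(\log(1/\eps))$ depth.
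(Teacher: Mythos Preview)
The paper does not give its own proof of this theorem; it simply quotes the result from Yarotsky's paper \cite{YarotskyErrorBounds}. Your proposal is precisely the argument from that reference: grid of spacing $h\sim\eps^{1/k}$, product-of-tents partition of unity, local Taylor polynomials, and Yarotsky's $O(\log(1/\delta))$-depth approximate squaring/multiplication subnetwork to realize all the required products, assembled in parallel over the $O(\eps^{-d/k})$ centers. So there is nothing to compare against in the present paper, and your sketch matches the cited source.

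One slip worth fixing: for $d\geq 2$ the bump $\phi_\nu(x)=\prod_{i=1}^d(1-h^{-1}|x_i-\nu_i|)_+$ is \emph{not} piecewise affine and is \emph{not} exactly representable by a ReLU network; only the individual one-dimensional tent factors are. This is harmless for your argument, since you later (correctly) feed all $d+|\alpha|\leq d+k-1$ scalar factors---the $d$ tents and the $|\alpha|$ coordinate differences---into the approximate-multiplication tree; just drop the claim that $\phi_\nu$ itself is piecewise affine and exactly representable.
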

Although not stated explicitly in \cite[Theorem~1]{YarotskyErrorBounds},
the proof shows that $\Phi_\eps^f$ can be chosen to be $(s,\eps)$-quantized
for some $s = s(d,k) \in \N$.

We close this section with a surprising result from \cite{YarotskyOptimalApproximation}.
In that paper, Yarotsky shows that if one does \emph{not} restrict the growth of the depth as $\eps \to 0$,
and if one does \emph{not} insist that the networks be quantized,
then one can \emph{significantly} beat the approximation rates stated in
Theorems~\ref{thm:YarotskyUniformApproximation}, \ref{thm:OurSmoothFunctionApproximationTheorem},
and \ref{thm:MainTheorem}---at least in the ``low smoothness'' regime $\beta \in (0,1]$:

\begin{theorem}\label{thm:YarotskySurprise}(see \cite[Theorem~2]{YarotskyOptimalApproximation})

  Let $d \in \N$, $\beta \in (0,1]$, and $Q := [-\tfrac{1}{2}, \tfrac{1}{2}]^d$.
  There is $C = C(d,\beta) > 0$ such that for any $\eps \in (0,\tfrac{1}{2})$
  and $f \in C^\beta (Q)$ with $\|f\|_{C^\beta} \leq 1$ 
  there is a network $\vphantom{\vline height 0.35cm}\Phi_\eps^f$ satisfying
  $\vphantom{\vline height 0.35cm}\|f - \Realization_\varrho (\Phi_\eps^f)\|_{\sup} \leq \eps$
  and $W(\Phi_\eps^f) \leq C \, \eps^{-d/(2\beta)}$.
\end{theorem}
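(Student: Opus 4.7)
The plan is to follow Yarotsky's bit-extraction strategy from \cite{YarotskyOptimalApproximation}: the improved exponent $d/(2\beta)$ comes from the fact that, without a depth restriction, a ReLU network with $W$ weights has VC-dimension $\Theta(W^2)$ and can therefore encode on the order of $W^2$ independent bits of information about $f$, whereas shallow networks only yield $\Theta(W)$. The concrete goal is to reduce the approximation problem to encoding a quantized look-up table, and then realize that table by a shallow-storage/deep-extraction subnetwork using $O(\eps^{-d/(2\beta)})$ weights.

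First I would discretize. Set $h \asymp \eps^{1/\beta}$, partition $Q$ into $M \asymp \eps^{-d/\beta}$ cubes $\{Q_j\}_{j=1}^{M}$ of side $h$ with centers $x_j$, and quantize each $f(x_j) \in [-1,1]$ to the nearest multiple of $\eps/4$, so that it is encoded by a $K$-bit pattern $b_j \in \{0,1\}^K$ with $K = \lceil \log_2(8/\eps) \rceil$. Let $\hat f$ be the piecewise-constant function that takes the dequantized value on cube $Q_j$. Because $\beta \le 1$ and $\|f\|_{C^\beta} \le 1$, a Hölder estimate gives $\|f - \hat f\|_{\sup} \le h^\beta + \eps/4 \le \eps$; thus it suffices to realize $\hat f$ exactly by a ReLU network with at most $C\eps^{-d/(2\beta)}$ weights. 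The table $\{(j,b_j)\}_{j=1}^{M}$ carries $N := MK \asymp \eps^{-d/\beta}\log(1/\eps)$ bits in total.

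The network $\Phi_\eps^f$ would be built in three stages. An \emph{indexing} stage reads $x$ and outputs a binary representation of $j(x)$ using ReLU thresholding gadgets, at cost $O(d\log(1/\eps))$ weights. A \emph{storage-and-extraction} stage realizes the arbitrary map $j \mapsto b_j$: I split the $N$-bit description of $\hat f$ into $P := \lceil\sqrt{N}\rceil$ consecutive blocks of $G := \lceil N/P \rceil$ bits and store each block as the binary expansion of a single real weight $\theta_p \in (0,1)$. Given the block index $p$ and the in-block offset $q$ (both computable cheaply from the indexing output), a ReLU multiplexer with $P$ data weights selects $\theta_p$, a depth-$G$ chain of doubling maps $\theta \mapsto 2\theta - \lfloor 2\theta\rfloor$ peels off its $G$ bits into separate neurons using $O(G)$ weights, and a second multiplexer of width $G$ outputs the $q$-th extracted bit. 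A fixed affine layer converts $b_{j(x)}$ back into $\hat f(x)$. Summing the three stages gives $W(\Phi_\eps^f) = O(P + G + \log(1/\eps)) = O(\sqrt{N}) = O\bigl(\eps^{-d/(2\beta)}\sqrt{\log(1/\eps)}\bigr)$; the residual $\sqrt{\log(1/\eps)}$ factor is absorbed into $C$ either by aligning block boundaries so each block stores exactly $K$ bits, or by rerunning the construction at target accuracy $c\eps$ for a suitable $c > 0$.

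The main obstacle is implementing the doubling map, which is discontinuous precisely at the dyadic rationals where the stored weights $\theta_p$ live, while every ReLU realization is Lipschitz. I would use Yarotsky's safe-encoding trick: embed each bit $0/1$ as $\tfrac{1}{4}/\tfrac{3}{4}$ inside its current sub-interval so that the iterated state stays a uniform distance away from the discontinuity, and realize the doubling step by the piecewise-linear ReLU sawtooth that coincides with it on this safe region. A geometric-series estimate then shows that the cumulative Lipschitz error after $G$ iterations remains below the bit-resolution threshold $\tfrac{1}{2}$, so every extracted bit is exact. Combining the three stages yields a ReLU network with $\|f - \Realization_\varrho(\Phi_\eps^f)\|_{\sup} \le \eps$ and $W(\Phi_\eps^f) \le C\,\eps^{-d/(2\beta)}$, which is Theorem~\ref{thm:YarotskySurprise}.
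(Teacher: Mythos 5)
First, a point of reference: the paper does not prove Theorem~\ref{thm:YarotskySurprise} at all --- it is quoted from \cite[Theorem~2]{YarotskyOptimalApproximation} --- so your proposal must be judged as a reconstruction of Yarotsky's argument. Your high-level strategy (a quantized look-up table realized by deep bit extraction, storing $\Theta(W^2)$ bits in $W$ weights) is indeed the right one, but the concrete encoding has a genuine gap. Quantizing each of the $M \asymp \eps^{-d/\beta}$ cell values \emph{independently} to $K \asymp \log_2(1/\eps)$ bits produces a table of $N = MK \asymp \eps^{-d/\beta}\log(1/\eps)$ bits, hence $W \asymp \sqrt{N} \asymp \eps^{-d/(2\beta)}\sqrt{\log(1/\eps)}$. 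That $\sqrt{\log(1/\eps)}$ factor is unbounded as $\eps \to 0$ and cannot be ``absorbed into $C$''; neither of your two fixes touches it, since realigning block boundaries does not change the total bit count $N = MK$, and rerunning the construction at accuracy $c\eps$ only changes constants inside the logarithm. As written, your argument yields the weaker bound $W(\Phi_\eps^f) \leq C\,\eps^{-d/(2\beta)}\sqrt{\log(1/\eps)}$, not the stated one.

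The missing idea is that the table must be encoded at the metric entropy of the H\"older ball, which is $\asymp \eps^{-d/\beta}$ bits \emph{without} a logarithmic factor (this is exactly the \cite{ClementsEntropyNumbers} bound invoked in Section~\ref{sec:Optimality}); independent per-cell quantization overshoots this by precisely the factor you lose. The H\"older condition with $\beta \le 1$ gives $|f(x_j) - f(x_{j'})| \lesssim h^\beta \asymp \eps$ for adjacent cells, so the \emph{difference} of the quantized values of neighbouring cells takes only $O(1)$ distinct values and costs $O(1)$ bits; encoding one anchor value plus increments along an enumeration of the cells brings the total down to $O(M) = O(\eps^{-d/\beta})$ bits and hence $W = O(\eps^{-d/(2\beta)})$. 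This in turn forces you to reconstruct $\hat f(x_j)$ as a sum of up to $M$ stored increments, which a network with only $O(\sqrt{M})$ weights cannot do naively --- this is where the real work in Yarotsky's proof lies (he uses a two-scale ``cached'' construction: a cheap coarse approximant that pins down $f$ on a coarse grid, plus $O(1)$-bit fine-scale corrections relative to it). Two smaller issues: a discontinuous piecewise-constant $\hat f$ cannot be realized \emph{exactly} by a (necessarily continuous) ReLU network, so you must argue that the interpolation occurring in the transition layers between cells stays within $O(\eps)$ of $f$ --- which again uses the H\"older bound on adjacent cell values; and your extraction stage must output all $K$ bits of $b_{j(x)}$ (or the reconstructed value), not just the single $q$-th bit. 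The remaining ingredients --- the indexing stage, the multiplexers, and the ``safe'' embedding of bits as $\tfrac14$ and $\tfrac34$ to keep the iterated doubling map away from its discontinuities --- are sound and consistent with Yarotsky's construction.
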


Note that Theorem~\ref{thm:YarotskyUniformApproximation} only yields
$W(\Phi_\eps^f) \lesssim \eps^{-d/\beta}$ instead of $W(\Phi_\eps^f) \lesssim \eps^{-d/(2\beta)}$.
Also note that Theorem~\ref{thm:YarotskySurprise} does \emph{not} claim that the networks
can be chosen to be quantized.
In fact, this is \emph{impossible}, as shown in the next section.


\section{Optimality of the approximation results}
\label{sec:Optimality}

Assuming that the complexity of the individual weights of the network does not grow too
quickly as $\eps \downarrow 0$, the complexity bound $W(\Phi_\eps^f) \lesssim \eps^{-d/\beta}$ derived in
Theorems~\ref{thm:OurSmoothFunctionApproximationTheorem} and \ref{thm:YarotskyUniformApproximation}
is optimal up to a log-factor.
In fact, the same arguments as in the proof of \cite[Theorem 4.3]{OurReLUPaper} show the following:

\begin{proposition}\label{prop:OptimalityQuantizedWeights}
  Let $d,s \in \N$ and $\beta, B, p \in (0,\infty)$ and let $Q := [-\tfrac{1}{2}, \tfrac{1}{2}]^d$.
  Then there is a function $f \in C^\beta (Q)$ with $\|f\|_{C^\beta} \leq B$ and a null-sequence
  $(\eps_k)_{k \in \N}$ such that
  \begin{align*}
    & \inf \left\{
             W(\Phi)
             \,\,\colon\!
             \begin{array}{l}
               \Phi \text{ an $(s,\eps_k)$-quantized neural netw.} \\
               \text{and } \|f - \Realization_\varrho (\Phi)\|_{L^p} \leq \eps_k
             \end{array}
           \!\!\!\right\} \\
    & \geq \eps_k^{-d/\beta} \big/ \big( \log(1/\eps_k) \cdot \log(\log(1/\eps_k)) \big)
    \quad \forall \, k \in \N \, .
  \end{align*}
\end{proposition}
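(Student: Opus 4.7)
The plan is a standard entropy/counting argument, followed by a Borel--Cantelli-style diagonalization. Let $\mathcal{F}_B := \{f \in C^\beta(Q) : \|f\|_{C^\beta} \leq B\}$. I would compare a packing-number lower bound for $\mathcal{F}_B$ in the $L^p$-metric against the number of distinct $L^p$-realizations of $(s,\eps)$-quantized networks of bounded weight count, and then upgrade the resulting ``one bad $f$ per scale'' statement to a single $f$ bad along a null-sequence.

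First, I would invoke the classical packing-number estimate for H\"older balls: there exist $c_1 > 0$ and $\eps_0 > 0$ (depending on $d,\beta,B,p$) such that for every $\eps \in (0, \eps_0)$ there is a $2\eps$-separated subset $S_\eps \subset \mathcal{F}_B$ in $\|\cdot\|_{L^p}$ with $\log |S_\eps| \geq c_1 \eps^{-d/\beta}$. The standard construction uses $\pm 1$ combinations of bump functions supported on a grid of side length $\sim \eps^{1/\beta}$, so the combinatorial structure of $S_\eps$ is also at hand.

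Next, I would count distinct realizations of $(s,\eps)$-quantized networks with $W(\Phi) \leq W$. The quantization set $[-\eps^{-s}, \eps^{-s}] \cap 2^{-s\lceil \log_2(1/\eps)\rceil}\Z$ has cardinality at most $2\eps^{-2s}+1$; after discarding trivial layers, the depth $L$ and all widths $N_\ell$ are bounded by $W$, so the number of admissible architectures and sparsity patterns is at most $W^{C_2 W}$ for some $C_2 = C_2(d)$. Combining gives
\[
  \#\bigl\{\Realization_\varrho(\Phi) : \Phi \text{ is } (s,\eps)\text{-quantized},\; W(\Phi) \leq W\bigr\} \leq \exp\bigl(C_3 W \log(W/\eps)\bigr)
\]
for some $C_3 = C_3(d,s) > 0$. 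If every $f \in \mathcal{F}_B$ admitted such an $\eps$-approximation in $L^p$, the $\eps$-neighborhoods of these realizations would cover $S_\eps$, and since $S_\eps$ is $2\eps$-separated each neighborhood contains at most one packing point; this forces $\exp(C_3 W \log(W/\eps)) \geq \exp(c_1 \eps^{-d/\beta})$. Substituting $W = \eps^{-d/\beta}/(\log(1/\eps)\log\log(1/\eps))$ makes the left exponent of order $\eps^{-d/\beta}/\log\log(1/\eps)$, a contradiction for small $\eps$; hence at each small enough scale some $f_\eps \in \mathcal{F}_B$ cannot be approximated within that weight budget.

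The final step, and the main obstacle, is to promote this to a single $f \in \mathcal{F}_B$ that fails at a whole null-sequence $\eps_k \downarrow 0$. I would pick $\eps_k \downarrow 0$ fast enough that at each scale the network count is at most, say, $|S_{\eps_k}|/2$, and run a nested Cantor-style selection through the packings $S_{\eps_k}$: at stage $k$, retain the majority subset of $S_{\eps_k}$ that is not $\eps_k$-approximable within budget, and exploit the additive bump structure of the $S_{\eps_k}$ to ensure the diagonal limit still lies in $\mathcal{F}_B$ (its $C^\beta$-norm is controlled by a geometric series in the scales). The delicate point is precisely this compatibility---arranging the packings across scales so that the inductive limit both stays in the $C^\beta$-ball and misses the approximable subset infinitely often---which is handled exactly as in the proof of \cite[Theorem~4.3]{OurReLUPaper}.
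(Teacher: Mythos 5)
Your proposal is essentially the paper's own argument: the paper proves Proposition~\ref{prop:OptimalityQuantizedWeights} by exactly this information-theoretic comparison, citing \cite[Lemma B.4]{OurReLUPaper} for the count of realizations of $(s,\eps)$-quantized networks with a given weight budget and \cite{ClementsEntropyNumbers} for the entropy lower bound on the H\"older ball, and it delegates all remaining details---in particular the upgrade to a single $f$ that fails along a whole null-sequence---to the proof of \cite[Theorem 4.3]{OurReLUPaper}. Your counting and packing estimates match that sketch, and your deferral of the delicate Cantor-style diagonalization to the same reference is no less complete than what the paper itself provides.
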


In particular, while Theorem~\ref{thm:MainTheorem} is not optimal for \emph{every} measure
$\mu$, it is optimal for the Lebesgue measure $\mu = \lambda$.

Proposition~\ref{prop:OptimalityQuantizedWeights} also shows that the
networks in Theorem~\ref{thm:YarotskySurprise}
\emph{can not be chosen to be $(s,\eps)$-quantized}.

The proof of Proposition~\ref{prop:OptimalityQuantizedWeights} is \emph{information-theoretic}:
On the one hand, \cite[Lemma B.4]{OurReLUPaper} shows that there are at most
$2^{\mathcal{O}(\eps^{-\theta} \log_2 (1/\eps))}$ many different realizations of
$(s,\eps)$-quantized ReLU networks that have $c \, \eps^{-\theta}$ nonzero weights.
On the other hand, \cite{ClementsEntropyNumbers} yields lower bounds
for the cardinality of families that are $\eps$-dense (with respect to the $L^p$ norm)
in the set of all $C^\beta$-functions $f$ such that $\|f\|_{C^\beta} \leq B$.

Finally, using bounds for the VC-dimension of neural networks (see \cite{VCDimensionBounds}),
Yarotsky showed that the approximation rates derived in Theorem~\ref{thm:YarotskySurprise} are optimal;
see \cite[Theorem~1]{YarotskyOptimalApproximation}.

In summary, we note the following:
\begin{compactitem}
  \item For quantized networks, the rates in
        Theorems~\ref{thm:OurSmoothFunctionApproximationTheorem}
        and \ref{thm:YarotskyUniformApproximation} are optimal.

  \item For fixed-depth networks, the rates \emph{for uniform approximation}
        in Theorem~\ref{thm:YarotskyUniformApproximation} cannot be improved,
        even \emph{without} assuming quantized networks; see \cite[Part 2 of Theorem 4]{YarotskyErrorBounds}.
        Note, however, that it is open whether these rates \emph{can be attained at all}
        using fixed-depth networks if $\beta > 1$.

  \item Since the VC dimension arguments used for proving \cite[Theorem 4]{YarotskyErrorBounds}
        do not seem to generalize to $L^p$-approximation,
        it is open whether the rates in Theorem~\ref{thm:OurSmoothFunctionApproximationTheorem}
        are optimal for bounded-depth networks \emph{without} assuming quantized networks.

  \item If one neither assumes bounded depth nor quantized networks,
        then the results in Theorems~\ref{thm:OurSmoothFunctionApproximationTheorem} and \ref{thm:YarotskyUniformConstantDepth}
        can be improved, at least for $\beta \in (0,2)$.
        The optimal rates for this setting and for $\beta \in (0,1]$ are given
        by Theorem~\ref{thm:YarotskySurprise}.
        It is open what the optimal rates for $\beta > 1$ are if one neither assumes bounded depth
        nor quantized networks.
\end{compactitem}

\section{Proof of Theorem~\ref{thm:MainTheorem}}
\label{sec:MainProof}

We begin with the following lemma which shows that ReLU networks can (approximately)
localize a function to a cube.

\begin{lemma}\label{lem:Localization}(modification of \cite[Lemma A.6]{OurReLUPaper})

  For $a,b \in \R^d$ and $0 < \eps < \min_{i \in \FirstN{d}} \tfrac{1}{2}(b_i - a_i)$,
  set\vspace{-0.1cm}
  \[
    [a,b) := \textstyle{\prod_{i=1}^d} [a_i,b_i)
    \quad \text{and} \quad
    [a,b]_\eps := \textstyle{\prod_{i=1}^d} [a_i + \eps, b_i - \eps].
  \]
  If $B \geq 1$, there is a $4$-layer network $\Lambda_{\eps,B}^{(a,b)}$
  with $1$-dimensional output and $(d+1)$-dimensional input,
  with ${W(\Lambda_{\eps,B}^{(a,b)}) \leq c(d)}$ weights,
  all of which have their absolute values bounded by
  ${d + B + \eps^{-1} \cdot (1 + \|a\|_{\ell^\infty} + \|b\|_{\ell^\infty})}$,
  and such that if $x \in \R^d$ and $y \in [-B,B]$, then
  \begin{equation}
    \big|
      \Realization_\varrho \! \big( \Lambda_{\eps,B}^{(a,b)} \big) (x,y)
      - y \, \Indicator_{[a,b)}(x)
    \big|
    \leq 2B \, \Indicator_{[a,b) \setminus [a,b]_\eps} (x).
    \label{eq:LocalizationEstimate}
  \end{equation}
\end{lemma}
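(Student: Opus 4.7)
The plan is to build $\Lambda_{\eps,B}^{(a,b)}$ as a composition of two functional blocks: first a ReLU ``bump'' $g(x) \in [0,1]$ equal to $1$ on $[a,b]_\eps$ and $0$ outside $[a,b]$, and then a gate that produces $y$ when $g(x) = 1$, produces $0$ when $g(x) = 0$, and stays bounded by $B$ in magnitude otherwise. Since the bound \eqref{eq:LocalizationEstimate} requires exactness only on $[a,b]_\eps$ and on the complement of $[a,b)$, and tolerates any value of magnitude at most $2B$ on the transition strip $[a,b) \setminus [a,b]_\eps$, it suffices for the gate to behave exactly in these two extreme regimes.

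For the bump, I would use the first hidden layer to produce the $4d$ activations $\varrho(x_i - a_i), \varrho(x_i - a_i - \eps), \varrho(x_i - b_i + \eps), \varrho(x_i - b_i)$ for $i \in \FirstN{d}$. Their signed combination
\[
  \phi_i(x_i) := \tfrac{1}{\eps}\bigl[\varrho(x_i - a_i) - \varrho(x_i - a_i - \eps) - \varrho(x_i - b_i + \eps) + \varrho(x_i - b_i)\bigr]
\]
realizes the coordinate trapezoid that takes values in $[0,1]$, equals $1$ on $[a_i + \eps, b_i - \eps]$, and vanishes outside $(a_i, b_i)$. In the second hidden layer I form $g(x) := \varrho\bigl(\sum_{i=1}^{d} \phi_i(x_i) - (d-1)\bigr)$; since $\phi_i \in [0,1]$, this yields $g = 1$ exactly on $[a,b]_\eps$, and $g = 0$ whenever some $\phi_i = 0$ (which includes both $x \notin (a,b)$ and the face $x_i = b_i$, where $\phi_i(b_i) = 0$).

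For the gate, I propagate the scalar input $y \in [-B,B]$ through the first two hidden layers as the non-negative pair $y_+ := \varrho(y)$, $y_- := \varrho(-y)$, exploiting that $\varrho$ is the identity on $[0,\infty)$. In the third hidden layer I compute the two units $\varrho\bigl(y + B(g(x)-1)\bigr)$ and $\varrho\bigl(-y + B(g(x)-1)\bigr)$, and the fourth (affine) output layer returns their difference. A direct case analysis shows that on $\{g = 1\}$ the output collapses to $\varrho(y) - \varrho(-y) = y$; on $\{g = 0\}$ both arguments are non-positive because $|y| \leq B$, so the output vanishes; and on $\{g \in (0,1)\}$ the output is a sign-preserving soft threshold of $y$ whose magnitude is bounded by $|y| \leq B$. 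These three facts give \eqref{eq:LocalizationEstimate}.

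The resulting network has exactly $4$ layers, only $O(d)$ nonzero weights, and the magnitude bound is read off blockwise: layer $1$ contributes shifts of size at most $\|a\|_{\ell^\infty} + \|b\|_{\ell^\infty} + \eps$, layer $2$ contributes the factor $\eps^{-1}$ and the bias $-(d-1)$, layer $3$ contributes the factor $B$ and bias $-B$, and layer $4$ uses only $\pm 1$, all of which are dominated by $d + B + \eps^{-1}(1 + \|a\|_{\ell^\infty} + \|b\|_{\ell^\infty})$. The only non-routine step is the gate, since ReLU networks cannot multiply two inputs exactly; the soft-threshold identity is the essential device, chosen precisely because it is exact at $g \in \{0,1\}$ and because the $2B$ slack on the transition strip absorbs any imprecision elsewhere.
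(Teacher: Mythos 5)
Your construction is essentially the paper's: the same coordinate trapezoids (the paper's $t_i$ equals your $\phi_i$), summed and thresholded against $d$, followed by a two-sided ReLU gate that is exact when the bump is $1$ or $0$ and bounded by $B$ in between; the paper merely fuses your layers $2$--$3$ into one ReLU layer via $\sum_\ell (-1)^\ell B\,\varrho\big(\varrho((-1)^\ell y/B) - d + \sum_i t_i(x_i)\big)$, while you insert the explicit clip $g$ and keep $y$ at its natural scale. Your case analysis for \eqref{eq:LocalizationEstimate} is correct: error $0$ on $[a,b]_\eps$ (where $g=1$) and on $\R^d \setminus [a,b)$ (where some $\phi_i = 0$, hence $g=0$ and both third-layer arguments are $\leq |y|-B \leq 0$), and magnitude at most $|y|+|y| \leq 2B$ on the transition strip.

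The one flaw is in the weight-magnitude bookkeeping. By placing the factor $\eps^{-1}$ in the second layer, your first layer computes $\varrho(x_i - a_i)$, $\varrho(x_i - a_i - \eps)$, etc., with \emph{unscaled} biases of size up to $\max(\|a\|_{\ell^\infty}, \|b\|_{\ell^\infty})$, and your claim that these are dominated by $d + B + \eps^{-1}(1 + \|a\|_{\ell^\infty} + \|b\|_{\ell^\infty})$ fails when $\eps > 1$, which the lemma permits whenever $\min_i (b_i - a_i) > 2$ (e.g.\ $d = B = 1$, $a = (10)$, $b = (20)$, $\eps = 4$: bias $20$ versus bound $9.75$). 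The fix is exactly what the paper does: put the scaling inside the first-layer affine maps, i.e.\ use $\varrho\big(\tfrac{x_i - a_i}{\eps}\big)$ etc., so that all first-layer weights and biases are bounded by $\eps^{-1}(\|a\|_{\ell^\infty} + \|b\|_{\ell^\infty}) + 1$, and combine the trapezoids in the next layer with coefficients $\pm 1$; this is also why the stated bound carries the factor $\eps^{-1}(1 + \|a\|_{\ell^\infty} + \|b\|_{\ell^\infty})$. For the regime in which the lemma is actually invoked later ($\eps = 2^{-kN} \leq \tfrac12$), your version happens to satisfy the bound, so the issue is one of matching the lemma as stated rather than of the underlying idea.
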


\begin{proof}
  For $i \in \FirstN{d}$, define a function $t_i : \R \to \R$ by setting
  \({
    t_i (x) = \varrho(\tfrac{x - a_i}{\eps})
              - \varrho(\tfrac{x - a_i - \eps}{\eps})
              - \varrho(\tfrac{x - b_i + \eps}{\eps})
              + \varrho(\tfrac{x - b_i}{\eps})
  }\).
  Note that $t_i$ is the realization of a two-layer ReLU network with at most $12$
  nonzero weights, all of which satisfy the required bound on the absolute value.
  It is easy to see that $0 \leq t_i \leq 1$ and $t_ i \equiv 1$ on $[a_i + \eps, b_i - \eps]$,
  while $t_i \equiv 0$ on $\R \setminus (a_i, b_i)$.

  Now, define $T : \R^d \times \R \to \R$ by\vspace{-0.2cm}
  \[
    T(x,y) := \sum_{\ell = 0}^1
                (-1)^\ell B \,
                \varrho \Big(
                          \varrho \big( (-1)^\ell \tfrac{y}{B} \big)
                          - d
                          + \textstyle{\sum_{i=1}^d} t_i (x_i)
                        \Big).
    \vspace{-0.2cm}
  \]
  By construction, $T = \Realization_\varrho(\Lambda_{\eps,B}^{(a,b)})$
  for a ReLU network $\Lambda_{\eps,B}^{(a,b)}$ as in the statement of the lemma.
  Furthermore, if $y \in [-B,B]$, then
  $\varrho \big( (-1)^\ell \tfrac{y}{B} \big) - d + \textstyle{\sum_{i=1}^d} t_i (x_i) \leq 1$,
  which shows $|T(x,y)| \leq B$.
  Next, if $y \in [-B,B]$ and $x \in [a,b]_\eps$, then $t_i(x_i) = 1$ for all
  $i \in \FirstN{d}$, and hence $T(x,y) = y$.
  Finally, if $y \in [-B,B]$ and $x \in \R^d \setminus [a,b)$, then $t_i (x_i) = 0$
  for some ${i \in \FirstN{d}}$, which entails $T(x,y) = 0$.
  This proves Eq.~\eqref{eq:LocalizationEstimate}.
\end{proof}

In \cite{OurReLUPaper}, it was used that Estimate~\eqref{eq:LocalizationEstimate}
shows that $\Realization_\varrho(\Lambda_{\eps,B}^{(a,b)})(\bullet, f(\bullet))$
and $\Indicator_{[a,b]} \cdot f$ are close in $L^p$.
Our next result shows that \emph{if one properly chooses the endpoints $a,b$},
one even gets closeness (including an approximation rate) in $L^p(\mu)$.
In addition to several results from \cite{OurReLUPaper}, this observation is the main ingredient
for our proof of Theorem~\ref{thm:MainTheorem}.

\begin{proposition}\label{prop:PartitionOfUnityEndPoints}
  Let $d \in \N$, $\gamma > 0$, $p \in [1,\infty)$, and define $k := d + 1 + p \gamma$.
  For $N \in \N$, let ${\Omega_N := \Z^d \cap [0, 2 \cdot 2^N - 1]^d}$.
  For $a \in \R^d$ and $\omega \in \Omega_N$, let ${\lowerBound := a + \tfrac{\omega}{2^N}\in \R^d}$
  and ${\upperBound := a + \tfrac{\omega + (1,\dots,1)}{2^N} \in \R^d}$,
  as well as ${I_N^{a,\omega} := [\lowerBound, \upperBound)}$.

  Let $\mu$ be a finite Borel measure on
  $\vphantom{\vrule height 0.4cm}Q := [-\tfrac{1}{2}, \tfrac{1}{2}]^d$.
  For Lebesgue-almost every $a \in [-5,5]^d$, there is a constant ${C_a = C_a (a,p,\mu,\gamma) > 0}$
  such that if $B \geq 1$ and if for each $\omega \in \Omega_N$
  a measurable function $f_\omega : Q \to [-B,B]$ is given, then for every $N \in \N$,
  with $\Lambda_{\eps,B}^{(a,b)}$ as in Lemma~\ref{lem:Localization}, we have
  \[
    \bigg\| \!
      \sum_{\omega \in \Omega_N} \!\!\!
        \Big[ \!
          \Realization_\varrho (\Lambda_{2^{-kN}, B}^{(\lowerBound \!, \upperBound)})(\bullet, f_\omega(\bullet))
          - \Indicator_{I_N^{a,\omega}} \cdot f_\omega
        \Big]
    \bigg\|_{L^p(\mu)} \!\!\!
    \leq \frac{C_a B}{2^{N \gamma}}.
  \]
\end{proposition}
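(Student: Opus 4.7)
The plan is to apply Lemma~\ref{lem:Localization} pointwise to each summand and then handle the resulting ``boundary error set'' by a Fubini argument over the shift parameter $a$. Set $\eps_N := 2^{-kN}$, and note that since the half-open cubes $\{I_N^{a,\omega}\}_{\omega \in \Omega_N}$ are pairwise disjoint, at each point $x$ only one value of $\omega$ contributes to the sum. Writing
\[
 D_\omega(x) := \Realization_\varrho(\Lambda_{\eps_N,B}^{(\lowerBound, \upperBound)})(x, f_\omega(x)) - f_\omega(x)\, \Indicator_{I_N^{a,\omega}}(x),
\]
Lemma~\ref{lem:Localization} gives $|D_\omega(x)| \leq 2B \, \Indicator_{I_N^{a,\omega} \setminus [\lowerBound, \upperBound]_{\eps_N}}(x)$, and this upper bound is supported in $I_N^{a,\omega}$. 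Disjointness therefore yields the pointwise estimate
\[
\Bigl|\sum_{\omega \in \Omega_N} D_\omega(x)\Bigr| \leq 2B\, \Indicator_{B_N^a}(x), \qquad B_N^a := \bigcup_{\omega \in \Omega_N}\bigl(I_N^{a,\omega} \setminus [\lowerBound, \upperBound]_{\eps_N}\bigr),
\]
and so everything reduces to proving $\mu(B_N^a \cap Q) \lesssim 2^{-pN\gamma}$ for Lebesgue-a.e.\ $a \in [-5,5]^d$.

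Next I would observe that $B_N^a \cap Q \subseteq \bigcup_{i=1}^d E_N^{a,i}$, where $E_N^{a,i} := \{x \in Q : \operatorname{dist}(x_i - a_i,\, 2^{-N}\Z) < \eps_N\}$ is a union of thin slabs of thickness $2\eps_N$ perpendicular to the $i$-th coordinate axis. Thus the question becomes one of controlling $\mu(E_N^{a,i})$ for generic $a_i$.

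The key step is then a Fubini estimate. For each fixed $x \in Q$, the set of $a_i \in [-5,5]$ for which $x \in E_N^{a,i}$ is a union of at most $O(2^N)$ intervals of length $2\eps_N$, so it has one-dimensional Lebesgue measure $\lesssim 2^{(1-k)N}$ uniformly in $x$. Integrating against $\mu$ and recalling that $k = d + 1 + p\gamma$ yields
\[
\int_{[-5,5]^d} \mu(E_N^{a,i})\, da \;\lesssim\; \mu(Q)\cdot 2^{-(d+p\gamma)N}.
\]
Multiplying by $2^{pN\gamma}$ and summing over $N \in \N$ and $i \in \FirstN{d}$ produces only the convergent geometric series $\sum_N 2^{-dN}$, so by Fubini the quantity $M_a := \sum_N \sum_i 2^{pN\gamma}\, \mu(E_N^{a,i})$ is finite for Lebesgue-a.e.\ $a \in [-5,5]^d$. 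Setting $C_a := 2\, M_a^{1/p}$ and applying the pointwise bound from the first paragraph then gives the claimed $L^p(\mu)$ estimate.

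The main obstacle is conceptual rather than computational: one cannot hope to bound the $\mu$-measure of the boundary set for \emph{every} $a$ (if $\mu$ charges a single grid point, no polynomial decay is possible), so the $a$-dependence of $C_a$ is essential. Once one recognizes that averaging the shift parameter $a$ over the large cube $[-5,5]^d$ converts this difficulty into a Fubini calculation, the rest is bookkeeping: carefully choosing the exponent $k = d + 1 + p\gamma$ so that the geometric series in $N$ is summable, and packaging the resulting almost-sure finite sup as the constant $C_a$.
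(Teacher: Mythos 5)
Your proposal is correct and follows essentially the same route as the paper: a pointwise application of Lemma~\ref{lem:Localization} together with disjointness of the cubes, followed by a Tonelli/Fubini average over the shift $a \in [-5,5]^d$, where the choice $k = d+1+p\gamma$ makes the series in $N$ summable and almost-everywhere finiteness of the resulting sum furnishes $C_a$. The only (harmless) difference is bookkeeping: you bound the boundary layer by $d$ coordinate slabs around the grid $a_i + 2^{-N}\Z$ rather than summing the per-cube contributions over $\omega \in \Omega_N$, which even gives a slightly better summand ($2^{-dN}$ instead of $2^{-N}$) but changes nothing essential.
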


\begin{proof}
  We consider $\mu$ as a Borel measure on $\R^d$, by setting $\mu(A) := \mu(A \cap Q)$.
  If $x \in [\lowerBound, \upperBound) \setminus [\lowerBound, \upperBound]_{2^{-kN}}$,
  then $x_i \notin [(\lowerBound)_i + 2^{-kN}, (\upperBound)_i - 2^{-kN}]$ for some $i \in \FirstN{d}$;
  hence,
  $x_i \in [(\lowerBound)_i , (\lowerBound)_i + 2^{-kN}] \cup [(\upperBound)_i - 2^{-kN}, (\upperBound)_i]$.
  By definition of $a_N^{\omega,\pm}$, this implies $a_i \in J_N^{(i)} (x,\omega)$, where
  \[
    J_N^{(i)}(x,\omega)
    \!:=\! [(\theta_N^{x,\omega})_i - \tfrac{1}{2^{kN}}, (\theta_N^{x,\omega})_i]
          \cup [(\eta_N^{x,\omega})_i, (\eta_N^{x,\omega})_i + \tfrac{1}{2^{kN}}],
  \]
  with $\theta_N^{x,\omega} := x - 2^{-N} \omega$
  and $\eta_N^{x,\omega} := x - 2^{-N}(\omega + (1,\dots,1))$.

  \smallskip{}

  Let $R := [-5,5]^d$.
  Using $\mu(A) = \int_{\R^d} \Indicator_A (x) \, d\mu(x)$, 
  we see
  {\setlength{\belowdisplayskip}{4pt}
   \setlength{\abovedisplayskip}{2.5pt}
  \begin{align*}
    \circledast
    & := \!\!
         \int_{R}
          \sum_{N=1}^\infty \!
            2^{N p\gamma} \!\!
            \sum_{\omega \in \Omega_N} \!\!
              \mu \big( [\lowerBound, \upperBound) \!\setminus\! [\lowerBound, \upperBound]_{2^{-kN}} \big)
         d a \\
    & \leq \sum_{N=1}^\infty \,
             \sum_{\omega \in \Omega_N} \,
               \sum_{i=1}^d
                 2^{N \gamma p} \!\!
                 \underbracket{
                   \int_{R}
                     \int_{\R^d} \!
                       \Indicator_{J_N^{(i)}(x,\omega)} (a_i)
                     \, d\mu(x)
                   \, d a
                 }_{:= \oplus_{N}^{(i)}(\omega)} \! .
  \end{align*}
  }%
  Using the Lebesgue measure $\lambda$, Fubini's theorem shows
  {\setlength{\belowdisplayskip}{4pt}
   \setlength{\abovedisplayskip}{4pt}
  \[
    \oplus_N^{(i)}(\omega)
    =\!\! \int_{\R^d} \!\!
            \lambda \Big( \big\{ a \in  [-5,5]^d \colon a_i \in J_N^{(i)}(x,\omega) \big\} \Big)
          \, d\mu(x),
  \]}%
  from which we get $\oplus_N^{(i)}(\omega) \leq 2 \cdot 10^{d-1} \mu(\R^d) \cdot 2^{-Nk}$.
  Thus, 
  \({
    \vphantom{\vline height 0.4cm}
    \circledast
    \lesssim \sum_{N=1}^\infty |\Omega_N| \cdot 2^{N \gamma p} \cdot 2^{-N k}
    \lesssim \sum_{N=1}^\infty 2^{N (d + \gamma p - k)}
    <        \infty
  }\),
  since $k = 1 + d + \gamma p$.

  Recalling the definition of $\circledast$, we see that
  \(
    \sum_{N=1}^\infty \!\!
      \big[
        2^{N p \gamma} \!
        \sum_{\omega \in \Omega_N}
          \mu \big( [\lowerBound, \upperBound) \setminus [\lowerBound, \upperBound]_{2^{-kN}} \big)
      \big]
    \! < \! \infty
  \)
  for Lebesgue-almost every $a \in [-5,5]^d$.
  In particular, for Lebesgue-almost every $a \in [-5,5]^d$, there is a constant
  $C_a = C_a(a,p,\gamma,\mu) > 0$ such that for every $N \in \N$, we have
  \(
    \sum_{\omega \in \Omega_N}
      \mu \big( [\lowerBound, \upperBound) \setminus [\lowerBound, \upperBound]_{2^{-kN}} \big)
    \leq C_a \cdot 2^{-N p \gamma} 
  \).

  Let us fix such a point $a \in [-5,5]^d$, and for each $\omega \in \Omega_N$, let
  $f_\omega : Q \to [-B,B]$ be measurable.
  Estimate \eqref{eq:LocalizationEstimate} shows
  \begin{align*}
    & \sum_{\omega \in \Omega_N}
        \big|
          \Realization_\varrho \big( \Lambda_{2^{-kN}, B}^{(\lowerBound, \upperBound)} \big)
              (\bullet, f_\omega(\bullet))
          - \Indicator_{[\lowerBound, \upperBound)} \cdot f_\omega
        \big| \\
    & \leq 2B \, \sum_{\omega \in \Omega_N}
                   \Indicator_{[\lowerBound, \upperBound) \setminus [\lowerBound,\upperBound]_{2^{-kN}}}
      =    2B \cdot \Indicator_{P}
  \end{align*}
  for
  \(
    P
    := \biguplus_{\omega \in \Omega_N}
         [\lowerBound, \upperBound) \setminus [\lowerBound,\upperBound]_{2^{-kN}}
  \),
  where the union is disjoint.
  Hence,\vspace{-0.15cm}
  \begin{align*}
    & \bigg\| \!
        \sum_{\omega \in \Omega_N} \!\!\!
          \Big[ \!
            \Realization_\varrho (\Lambda_{2^{-kN}, B}^{(\lowerBound, \upperBound)})(\bullet, f_\omega(\bullet))
            - \Indicator_{[\lowerBound, \upperBound)} \cdot f_\omega
          \Big]
      \bigg\|_{L^p(\mu)} \\
    & \leq 2B \cdot \Big(
                      \sum_{\omega \in \Omega_N}
                        \mu \big(
                              [\lowerBound, \upperBound) \setminus [\lowerBound,\upperBound]_{2^{-kN}}
                            \big)
                    \Big)^{1/p} \\
    & \leq 2B \cdot C_a^{1/p} \cdot 2^{-N \gamma}
      \quad \forall \, N \in \N.
    \hspace{3.5cm}\text{\qedhere}
  \end{align*}
\end{proof}

\vspace{-0.1cm}
To complete the proof of Theorem~\ref{thm:MainTheorem}, we need two results from \cite{OurReLUPaper}.
The first result is concerned with an approximate implementation of a family of polynomials.
\vspace{-0.1cm}

\begin{lemma}\label{lem:PolynomImplementation}(see \cite[Lemma A.5]{OurReLUPaper})
  Let $d,m \in \N$ and $B, \beta > 0$.
  Set $Q := [-\tfrac{1}{2}, \tfrac{1}{2}]^d$,
  let $(x_\ell)_{\ell \in \underline{m}} \subset Q$,
  and $(c_{\ell,\alpha})_{\ell \in \underline{m}, \alpha \in \N_0^d, |\alpha| < \beta} \subset [-B,B]$.

  Then there are $c = c(d,\beta,B) > 0$, $s = s(d,\beta,B) \in \N$, and $L = L(d,\beta) \in \N$
  with $L \leq 1 + (1 + \lceil \log_2 \beta \rceil) (11 + \tfrac{\beta}{d})$ such that for all
  $\eps \in (0, \tfrac{1}{2})$, there is a neural network $\Phi_\eps$ with
  $d$-dimensional input, $m$-dimensional output, with
  $L(\Phi_\eps) \leq L$ and $W(\Phi_\eps) \leq c \cdot (m + \eps^{-d/\beta})$,
  such that all weights of $\Phi_\eps$ belong to $[-\eps^{-s}, \eps^{-s}]$, and such that\vspace{-0.1cm}
  \[
    \Big|
      [\Realization_\varrho (\Phi_\eps) (x)]_\ell
      - \sum_{|\alpha| < \beta} \!\! c_{\ell,\alpha} \, (x - x_\ell)^\alpha
    \Big|
    < \eps
    \quad \forall \, \ell \in \FirstN{m} \text{ and } x \in Q .
    \vspace{-0.1cm}
  \]
\end{lemma}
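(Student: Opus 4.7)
The plan is to realize each polynomial $\sum_{|\alpha| < \beta} c_{\ell,\alpha}(x-x_\ell)^\alpha$ as a cascade of approximate multiplications built from approximate squarings, via the polarization identity $uv = \tfrac{1}{4}\bigl[(u+v)^2-(u-v)^2\bigr]$. The first ingredient is a Yarotsky-style sawtooth gadget: with the tent map $g(t) = 2\varrho(t) - 4\varrho(t-\tfrac{1}{2}) + 2\varrho(t-1)$, the telescoping identity $t - t^2 = \sum_{k=1}^{\infty} 2^{-2k}\, g^{\circ k}(t)$ for $t \in [0,1]$ yields, after truncation at a suitable depth, an $(s,\eps)$-compatible subnetwork $\Sigma$ approximating $t \mapsto t^2$ on a bounded interval, with constant depth bounded by $11 + \beta/d$.

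Running two copies of $\Sigma$ in parallel on the affine combinations $u+v$ and $u-v$ and taking an affine readout produces an approximate product gadget $\Pi$ of the same depth. For each $\ell \in \FirstN{m}$ and each multi-index $\alpha$ with $|\alpha| < \beta$, I would compute the monomial $(x-x_\ell)^\alpha$ by a balanced binary tree of at most $\lceil\log_2\beta\rceil$ sequential applications of $\Pi$. Stacking these trees in parallel and preceding them by one affine layer that outputs all $m$ shifted inputs $x - x_\ell$ (at cost $O(md)$ weights) gives the overall depth bound $L \leq 1 + (1+\lceil\log_2\beta\rceil)(11+\beta/d)$. A single affine output layer of $O(m)$ weights then assembles the prescribed linear combinations of the computed monomials.

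Each gadget $\Pi$ contributes $O(\eps^{-d/\beta})$ weights once the per-level tolerance is set so that, after amplification through the $\lceil\log_2\beta\rceil$ multiplication levels --- using that all intermediate quantities stay in a bounded set, so Lipschitz constants are uniformly controlled --- the error at the root is at most $\eps$. Combined with the $O(md)$ preprocessing and $O(m)$ readout, this yields the stated bound $W(\Phi_\eps) \leq c \cdot (m + \eps^{-d/\beta})$.

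The main obstacle is the simultaneous bookkeeping: propagating the squaring error through $\lceil\log_2\beta\rceil$ multiplication levels while keeping the final accuracy $\leq \eps$, and verifying that the dyadic weights of the sawtooth construction --- which have denominator $2^{s\lceil\log_2(1/\eps)\rceil}$ and magnitude $\leq \eps^{-s}$ --- produce a genuinely $(s,\eps)$-quantized network for a single $s = s(d,\beta,B)$ that is preserved under composition. Matching the exact depth constant $11+\beta/d$ per gadget (rather than the looser $O(\log(1/\eps))$ of a straight Yarotsky squaring) is the reason the sawtooth is truncated at a depth scaling linearly in $\beta/d$, with the extra width $\eps^{-d/\beta}$ compensating for this shallow truncation.
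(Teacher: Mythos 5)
Your toolkit (sawtooth-based approximate squaring, polarization to get products, binary trees for the monomials of degree $<\beta$) is the same one underlying the proof cited from \cite[Lemma A.5]{OurReLUPaper}, but two steps do not hold up as written. The decisive one is the weight count: you build a separate tree of multiplication gadgets for every pair $(\ell,\alpha)$, and each gadget, at the fixed depth $\approx 11+\beta/d$, needs on the order of $\eps^{-d/\beta}$ weights to reach accuracy $\eps$. Parallel subnetworks do not share weights, so your construction has $\gtrsim m\,\eps^{-d/\beta}$ nonzero weights, not $c\,(m+\eps^{-d/\beta})$; in the intended application inside Theorem~\ref{thm:MainTheorem} one has $m \asymp \eps^{-d/\beta}$, so your bound degenerates to $\eps^{-2d/\beta}$ and the approximation rate is lost. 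The additive bound forces the $\eps$-dependent part to be computed only \emph{once}: for instance, expand $\sum_{|\alpha|<\beta}c_{\ell,\alpha}(x-x_\ell)^\alpha$ in the common monomial dictionary $\{x^\gamma \colon |\gamma|<\beta\}$ (the new coefficients stay in $[-C(d,\beta)B,\,C(d,\beta)B]$ because $\|x_\ell\|_{\ell^\infty}\le \tfrac12$), approximate this single, $\ell$-independent family of monomials by one subnetwork with $\lesssim \eps^{-d/\beta}$ weights, and then produce the $m$ outputs by a final affine layer costing $O(m)$ weights; this is how the cited proof reaches $c\,(m+\eps^{-d/\beta})$.

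Second, your constant-depth squaring gadget is not justified as stated: truncating $t-t^2=\sum_{k\ge 1}4^{-k}g^{\circ k}(t)$ after a number of terms proportional to $\beta/d$ leaves a truncation error of size $\approx 4^{-c\beta/d}$, which is independent of $\eps$, and extra width does not by itself shrink a truncation error. The mechanism that actually yields depth $O(\beta/d)$ with accuracy $\eps$ is that $g^{\circ k}$ is a sawtooth with $2^{k-1}$ teeth and hence is realized \emph{exactly} by a two-layer network with $O(2^{k})$ weights; taking $k\approx (d/\beta)\log_2(1/\eps)$ iterations per block, so each block has $O(\eps^{-d/\beta})$ weights, and composing $O(\beta/d)$ such blocks gives the required $\approx\tfrac12\log_2(1/\eps)$ iterations at bounded depth---this is precisely where the factor $11+\beta/d$ in the depth bound comes from, and without it your error does not go below a constant. (By contrast, your concern about producing a genuinely $(s,\eps)$-quantized network is not needed for this lemma: it only requires all weights to lie in $[-\eps^{-s},\eps^{-s}]$; the quantization step is performed later, via \cite[Lemma~3.7]{ApproximationWithSparselyConnectedDNN}, in the proof of Theorem~\ref{thm:MainTheorem}.)
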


Our final ingredient is a consequence of Taylor's theorem.

\begin{lemma}\label{lem:HoelderTaylorApproximation}(see \cite[Lemma A.8]{OurReLUPaper})
  Let $n \in \N_0$, $\sigma \in (0,1]$, and $\beta = n + \sigma$.
  Let $d \in \N$ and $Q := [-\tfrac{1}{2}, \tfrac{1}{2}]^d$.
  There is a constant $C = C(\beta,d) > 0$ such that for each $f \in C^\beta(Q)$
  with $\|f\|_{C^\beta} \leq B$ and each $x_0 \in (-\tfrac{1}{2}, \tfrac{1}{2})^d$,
  there is a polynomial $p(x) = \sum_{|\alpha| \leq n} c_\alpha \, (x-x_0)^\alpha$
  with $c_\alpha \in [-C B, C B]$ and such that
  $|f(x) - p(x)| \leq CB \cdot |x - x_0|^\beta$ for all $x \in Q$.
\end{lemma}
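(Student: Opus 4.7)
The natural choice is to take $p$ to be the Taylor polynomial of $f$ at $x_0$,
\[
  p(x) \;:=\; \sum_{|\alpha|\leq n} \tfrac{1}{\alpha!}\, \partial^\alpha f(x_0)\,(x-x_0)^\alpha ,
\]
so that $c_\alpha = \partial^\alpha f(x_0)/\alpha!$. Since $|\alpha| \leq n < \beta$ and $\|f\|_{C^\beta} \leq B$, each such derivative is bounded in sup-norm by $B$, whence $|c_\alpha| \leq B/\alpha! \leq B$. This already yields the coefficient bound (with any $C\geq 1$).

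For the remainder I would invoke the multivariate Taylor formula at order $n$, using that $Q$ is convex so that the segment $[x_0,x]$ lies in $Q$ for every $x \in Q$. This produces an intermediate point $\xi = \xi(x) \in [x_0,x]$ such that
\[
  f(x) - p(x) \;=\; \sum_{|\alpha|=n} \tfrac{1}{\alpha!}\bigl[\partial^\alpha f(\xi) - \partial^\alpha f(x_0)\bigr]\,(x-x_0)^\alpha .
\]
The Hölder part of the $C^\beta$-norm now controls the bracket: for $|\alpha|=n$, $\partial^\alpha f$ is $\sigma$-Hölder with seminorm at most $B$, so $|\partial^\alpha f(\xi)-\partial^\alpha f(x_0)| \leq B\,|\xi - x_0|^\sigma \leq B\,|x-x_0|^\sigma$. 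Combined with the elementary bound $|(x-x_0)^\alpha| \leq |x-x_0|^{|\alpha|} = |x-x_0|^n$, this gives
$|f(x)-p(x)| \leq C B \,|x-x_0|^{n+\sigma} = CB\,|x-x_0|^\beta$ with $C := \sum_{|\alpha|=n} 1/\alpha!$, a constant depending only on $\beta$ and $d$. Enlarging $C$ if necessary also absorbs the coefficient bound.

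I do not expect any substantive obstacle; the lemma is a textbook consequence of Taylor's theorem paired with the Hölder regularity of the top-order derivatives. The only point of care is to choose the right form of Taylor: one must evaluate the top-order term at an intermediate point, rather than appealing to an order-$(n+1)$ expansion with Lagrange remainder, since $f$ is not assumed to be $C^{n+1}$. Convexity of $Q$ is what guarantees that the intermediate point $\xi$ lies in the domain on which the Hölder estimate holds, and the openness condition $x_0 \in (-\tfrac12,\tfrac12)^d$ plays no deeper role than to locate $x_0$ in the interior where the derivatives are manifestly well-defined.
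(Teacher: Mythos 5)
Your proposal is correct and follows the same route as the cited proof (the paper itself only references \cite[Lemma A.8]{OurReLUPaper}, whose argument is exactly this: the order-$n$ Taylor polynomial at $x_0$, the Lagrange-type remainder with the top-order derivatives evaluated at an intermediate point of the segment $[x_0,x] \subset Q$, and the $\sigma$-Hölder bound on those derivatives). The only point worth noting is the degenerate case $n=0$, where the intermediate-point formula is vacuous and one simply uses $f(x)-f(x_0)$ together with the Hölder condition directly, which your argument accommodates trivially.
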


%
%
%


\begin{proof}[Proof of Theorem~\ref{thm:MainTheorem}]
  Let us fix some $a \in \big( (-\tfrac{3}{2}, -\tfrac{1}{2}] \setminus \Q \big)^d$
  and a constant $C_1 = C_1 (a,p,\mu,\beta)> 0$ satisfying the conclusion of
  Proposition~\ref{prop:PartitionOfUnityEndPoints} for the choice $\gamma := \beta$.
  Such an $a$ exists, since $\big( (-\tfrac{3}{2}, -\tfrac{1}{2}] \setminus \Q \big)^d$
  has positive Lebesgue measure.

  \medskip{}

  Let $N := \lceil \log_2 \eps^{-1/\beta} \rceil \in \N$, whence
  ${\tfrac{1}{\eps^{1/\beta}} \leq 2^N \leq \tfrac{2}{\eps^{1/\beta}}}$.
  We observe that
  $\vphantom{\vrule height 0.4cm}{Q \subset a + [0,2)^d \subset \biguplus_{\omega \in \Omega_N} [\lowerBound, \upperBound)}$,
  since we have $Q - a \subset [0,2)^d$.
  Next, define ${\vphantom{\vrule height 0.4cm}Q^\circ = (-\tfrac{1}{2}, \tfrac{1}{2})^d}$ and
  ${\Omega_N^\ast := \{ \omega \in \Omega_N \colon Q \cap [\lowerBound, \upperBound) \neq \emptyset \}}$.
  Since ${a \in (\R \setminus \Q)^d}$, we see for each $\omega \in \Omega_N^\ast$
  that there is some $x_\omega \in Q^\circ \cap [\lowerBound, \upperBound)$.
  Set $m := |\Omega_N^\ast|$ and write $\Omega_N^\ast = \{\omega_1,\dots,\omega_m\}$ for suitable
  $\omega_1,\dots,\omega_m$. Note $m \leq |\Omega_N| = (2 \cdot 2^N)^d \leq 4^d \, \eps^{-d/\beta}$.
  For $i \in \FirstN{m}$, set $x_i := x_{\omega_i}$.

  \smallskip{}

  Let $f \in C^\beta(Q)$ with $\|f\|_{C^\beta} \leq B$.
  Lemma~\ref{lem:HoelderTaylorApproximation} yields for each $\ell \in \FirstN{m}$ a sequence
  \(
    (c_{\ell,\alpha})_{\alpha \in \N_0^d, |\alpha| < \beta}
    \subset [-C_2 B, C_2 B]
  \)
  such that $|f(x) - p_\ell(x)| \leq C_2 B \cdot |x - x_\ell|^\beta$ for all $x \in Q$,
  where $p_\ell (x) := \sum_{|\alpha| < \beta} c_{\ell,\alpha} \, (x - x_\ell)^\alpha$.
  Here, $C_2 = C_2 (d,\beta) > 0$.

  \smallskip{}

  Next, we apply Lemma~\ref{lem:PolynomImplementation} (with $C_2 B$ instead of $B$) to obtain a
  neural network $\Phi$ with $d$-dimensional input and $m$-dimensional output such that
  \(
    \big| \big[ \Realization_\varrho (\Phi) (x) \big]_\ell - p_\ell (x) \big|
    \leq \tfrac{\eps}{4}
    < 1
  \)
  for all $x \in Q$ and
  $\vphantom{\vrule height 0.4cm}L(\Phi) \leq 1 + (1 + \lceil \log_2 \beta \rceil) (11 + \beta/d)$,
  as well as
  \(
    \vphantom{\vrule height 0.4cm}
    W(\Phi)
    \leq C_3 \cdot \big( m + (\eps/4)^{-d/\beta} \big)
    \leq C_4 \cdot \eps^{-d/\beta} .
  \)
  Here, $C_i = C_i (d,\beta,B)$ for $i \in \{3, 4\}$.
  Finally, all weights of $\Phi$ belong to $[-\eps^{-s_1}, \eps^{-s_1}]$
  for some $s_1 = s_1 (d, \beta, B) \in \N$.

  \smallskip{}

  Next, we use $|p_\ell(x)| \leq |p_\ell(x) - f(x)| + |f(x)|$ to derive
  \vspace{-0.08cm}
  \[
    |p_\ell(x)|
      \leq C_2 B \, |x - x_\ell|^\beta + B
      \leq B (1 + d^\beta C_2),
    \vspace{-0.08cm}
  \]
  whence
  $|f_{\omega_\ell}| \leq \tfrac{\eps}{4} + B (1 + d^\beta C_2) \leq B' = B'(d,\beta,B) \geq 1$
  for $f_{\omega_\ell} := \big( \Realization_\varrho (\Phi) \big)_\ell |_Q$.
  Let us set $f_\omega \equiv 0$ for ${\omega \in \Omega_N \setminus \Omega_N^\ast}$.

  \smallskip{}

  Now, set $g := \sum_{\omega \in \Omega_N^\ast} \Indicator_{[\lowerBound, \upperBound)} \, f_\omega$
  and ${k := d + 1 + p \beta}$ and furthermore
  \(
    G
    := \sum_{\omega \in \Omega_N}
         \Realization_\varrho (\Lambda_{2^{-kN}, B'}^{(\lowerBound, \upperBound)})(\bullet, f_\omega(\bullet))
  \).
  Then Proposition \ref{prop:PartitionOfUnityEndPoints} (with $B'$ instead of $B$) shows
  \({
    \| G - g \|_{L^p(\mu)}
    \leq \frac{C_1 B}{2^{N \gamma}}
    \leq C_5 \, \eps
  }\),
  where $C_5 = C_5 (p,\mu,\beta,d,B)$.

  For $x \in Q$, we have $x \in [a_N^{\omega_\ell,-}, a_N^{\omega_\ell,+})$ for a unique
  ${\ell \in \FirstN{m}}$, whence
  \(
    \vphantom{\vline height 0.38cm}
    |x - x_\ell|
    \leq d \cdot \|x - x_\ell\|_{\ell^\infty}
    \leq d \, 2^{-N}
    \leq d \, \eps^{1/\beta}
  \).
  Therefore,
  \(
    \vphantom{\vline height 0.38cm}
    g(x) = f_{\omega_\ell}(x) = \big[\Realization_\varrho (\Phi) (x)\big]_\ell
  \),
  and hence
  {
  \setlength{\abovedisplayskip}{3pt}
  \setlength{\belowdisplayskip}{3pt}
  \begin{align*}
    |f(x) - g(x)|
    & \leq |f(x) - p_\ell (x)| + |p_\ell(x) - [\Realization_\varrho (\Phi) (x) ]_\ell| \\
    & \leq C_2 B \cdot |x - x_\ell|^\beta + \tfrac{\eps}{4}
      \leq C_6 \cdot \eps,
  \end{align*}
  where $C_6 = C_6 (d,\beta,B)$.}
  Since $\mu$ is finite, we thus see ${\|f - G\|_{L^p(\mu)} \leq C_7 \, \eps}$
  for $C_7 = C_7 (p,\mu,d,\beta,B)$.

  It remains to show
  \(
    \| G - \Realization_\varrho (\Phi_{\eps}^f) \|_{L^p(\mu)} \leq \eps
  \)
  for a network $\Phi_{\eps}^f$ as in the statement of Theorem~\ref{thm:MainTheorem}.
  First, since the class of neural networks is closed under composition and addition
  (including control over the complexity of the resulting networks;
  see the end of the proof of \cite[Lemma A.7]{OurReLUPaper} for details), we see that
  $G = \Realization_\varrho (\Psi_\eps^f)$ for a network $\Psi_\eps^f$
  with $W(\Psi_\eps^f) \leq C_8 \eps^{-d/\beta}$ and
  $L(\Psi_\eps^f) \leq 7 + (1 + \lceil \log_2 \beta \rceil) (11 + \tfrac{\beta}{d})$,
  where $C_8 = C_8 (p,d,\beta,B)$,
  and such that all weights of $\Psi_\eps^f$ lie in $[-\eps^{-s_2}, \eps^{-s_2}]$
  for some $s_2 = s_2 (p,d,\beta,B,\mu) \in \N$.
  Here, we used that $|\Omega_N| \lesssim 2^{dN} \lesssim \eps^{-d/\beta}$
  and that all weights of the networks $\Lambda^{(\lowerBound,\upperBound)}_{2^{-kN}, B'}$
  have absolute value at most
  \({
    d + B' + 2^{kN} (1 + \|\lowerBound\|_{\ell^\infty} + \|\upperBound\|_{\ell^\infty})
    \leq d + B' + \tfrac{15 \cdot 2^k}{\eps^{k/\beta}},
  }\)
  while all weights of $\Phi$ have absolute value at most $\eps^{-s_1}$.
  Finally, \cite[Lemma~3.7]{ApproximationWithSparselyConnectedDNN} can be used to obtain a
  quantized network.
  Precisely, that lemma yields a network $\Phi_\eps^f$
  with the properties stated in Theorem~\ref{thm:MainTheorem} and such that
  $\|\Realization_\varrho (\Phi_\eps^f) - \Realization_\varrho (\Psi_\eps^f)\|_{\sup} \leq \eps$.
\end{proof}

\end{document}